\theoremstyle{plain}
\newtheorem{theorem}{Theorem}[section]
\newtheorem{proposition}[theorem]{Proposition}
\newtheorem{lemma}[theorem]{Lemma}
\newtheorem{remark}[theorem]{Remark}
\newtheorem{definition}[theorem]{Definition}
\newcommand{\R}{\mathbb{R}}
\newcommand{\N}{\mathbb{N}}
\newcommand{\C}{\mathcal{C}}
\newcommand{\constC}{\mathcal{K}}
\DeclareMathOperator{\sgn}{sgn}
\author{Maxime Breden \thanks{CMLA, ENS Cachan, CNRS, Universit\'e Paris-Saclay, 94235 Cachan, France. {\tt breden@cmla.ens-cachan.fr}}}
\begin{document}

\title{Applications of improved duality lemmas to the discrete coagulation-fragmentation equations with diffusion}

\date{\today}

\maketitle

\begin{abstract}
In this paper, we investigate the use of so called "duality lemmas" to study the system of discrete coagulation-fragmentation equations with diffusion. When the fragmentation is strong enough with respect to the coagulation, we show that we have creation and propagation of superlinear moments. In particular this implies that strong enough fragmentation can prevent gelation even for superlinear coagulation, a statement which was only known up to now in the homogeneous setting. We also use this control of superlinear moments to extend a recent result from \cite{BreDevFel16}, about the regularity of the solutions in the pure coagulation case, to strong fragmentation models.
\end{abstract}

\begin{center}
\textbf{Keywords:} discrete coagulation-fragmentation equations, Smoluchowski equations, strong fragmentation, duality arguments, moments estimates, regularity

\medskip

\textbf{Mathematics Subject Classification:} 35B45 $\cdot$ 35B65 $\cdot$ 82D60
\end{center}

\section{Introduction}\label{sec:intro}

In this work we consider the diffusive coagulation-fragmentation system describing the dynamics of clusters coalescing to build larger clusters and breaking apart into smaller pieces. Coagulation models were first introduced by Smoluchowski (see \cite{Smo16,Smo17}) and then complexified to take into account other effects like fragmentation and diffusion. These models are used in numerous and diverse applications at very different scales, in physics (smoke, sprays), chemistry (polymers), or biology (hematology, animal collective behavior). 

In this work we consider only discrete (in size) models, i.e. we assume that clusters can be of size $i\in\N^*$, and we denote by $c_i=c_i(t,x)$ the concentration of clusters of size $i$ at time $t$ and position $x$. We also assume that the clusters are confined in a smooth bounded domain $\Omega$ of $\R^N$. For any positive time $T$, we denote by $\Omega_T$ the set $[0,T]\times\Omega$. The concentrations $c_i$ satisfy the following set of equations, for all $i\in\N^*$,
\begin{equation}
\label{eq:syst_coag-frag}
\left\{
\begin{aligned}
& \partial_t c_i - d_i \Delta_x c_i = Q_i(c) + F_i(c), \quad &\text{on}& \ \Omega_T, \\
& \nabla_x c_i\cdot \nu = 0 \quad &\text{on}& \ [0,T]\times\partial\Omega, \\
& c_i(0,\cdot) = c_i^{in} \quad &\text{on}& \ \Omega,
\end{aligned}
\right.
\end{equation}
where $\nu(x)$ is a unit normal vector at $x\in\partial\Omega$ and the initial concentrations $c_i^{in}\geq 0$ are given. The coagulation terms $Q_i(c)$ and the fragmentation terms $F_i(c)$ respectively write:
\begin{equation}
\label{def:Q}
Q_i(c)=Q_i^+(c) - Q_i^-(c) = \frac{1}{2}\sum_{j=1}^{i-1}a_{i-j,j}c_{i-j}c_j - \sum_{j=1}^{\infty}a_{i,j}c_ic_j,
\end{equation}
\begin{equation}
\label{def:F}
F_i(c)=F_i^+(c) - F_i^-(c) = \sum_{j=1}^{\infty}B_{i+j}\beta_{i+j,i}c_{i+j} - B_ic_i.
\end{equation}
The nonnegative parameters $B_i$, $\beta_{i,j}$ and $a_{i,j}$ represent the total rate $B_i$ of fragmentation of clusters of size $i$, the average number $\beta_{i,j}$ of clusters of size $j$ produced due to fragmentation of a cluster of size $i$, and the coagulation rate $a_{i,j}$ of clusters of size $i$ with clusters of size $j$. The fragmentation of one cluster into smaller pieces should conserve mass, clusters of size $1$ should not fragment further and the coagulation rates should be symmetric, so for all $i,j\in\N^*$, we impose 
\begin{equation}
\label{hyp:modele}
i=\sum_{j=1}^{i-1}j\beta_{i,j}, \quad B_1=0,\quad  a_{i,j}=a_{j,i}  \quad \text{and} \quad a_{i,j},B_i,\beta_{i,j}\geq 0. 
\end{equation}
For more details on both the modeling and the applications, we refer the reader to the surveys \cite{Dra72,LauMis04,DevFel13} and the references therein. 

Assumption~\eqref{hyp:modele} allows us to write (formally for any sequence $(\varphi_i)$) the weak formulation:
\begin{equation}
\label{eq:form_faible_Q}
\sum_{i=1}^{\infty} \varphi_iQ_i(c) = \frac{1}{2}\sum_{i=1}^{\infty}\sum_{j=1}^{\infty}a_{i,j}c_ic_j(\varphi_{i+j}-\varphi_i-\varphi_j),
\end{equation}
\begin{equation}
\label{eq:form_faible_F}
\sum_{i=1}^{\infty} \varphi_iF_i(c) = -\sum_{i=2}^{\infty}B_ic_i\left(\varphi_i-\sum_{j=1}^{i-1}\beta_{i,j}\varphi_j\right).
\end{equation}
For any $k\geq 0$, we define the moment of order $k$ (associated to solutions $(c_i)$ of \eqref{eq:syst_coag-frag}-\eqref{hyp:modele}) as
\begin{equation*}
\rho_k(t,x)=\sum_{i=1}^{\infty}i^kc_i(t,x),
\end{equation*}
and similarly the initial moment of order $k$ as
\begin{equation*}
\rho_k^{in}(x)=\sum_{i=1}^{\infty}i^kc_i^{in}(x).
\end{equation*}
Considering $\varphi_i=i$ in the weak formulation \eqref{eq:form_faible_Q}-\eqref{eq:form_faible_F}, it is easy to see that (at the formal level), the total mass $\int\limits_{\Omega}\rho_1$ is conserved. Indeed we get that
\begin{equation}
\label{eq:rho_1}
\partial_t \left(\sum_{i=1}^{\infty} ic_i\right) - \Delta_x\left(\sum_{i=1}^{\infty} id_ic_i\right) =0,
\end{equation}
and an integration by part yields (thanks to the homogeneous Neumann boundary conditions)
\begin{equation}
\label{eq:non_gelation}
\frac{d}{dt} \int\limits_{\Omega}\rho_1(t,x)dx = 0 \quad \text{and thus}\quad  \int\limits_{\Omega}\rho_1(t,x)dx=  \int\limits_{\Omega}\rho_1^{in}(x)dx,\quad \forall t\geq 0.
\end{equation} 
Let us point out that \eqref{eq:non_gelation} can fail to be true if the coagulation is strong enough, the mass $\int\limits_{\Omega}\rho_1$ then becoming strictly smaller than the initial mass $\int\limits_{\Omega}\rho_1^{in}$ after some finite time $t^*$. This phenomenon is called gelation (in the homogeneous case see for instance \cite{HenErnZif83}, or \cite{EscMisPer02} for continuous (in size) models).

In all cases, weak solutions satisfy
\begin{equation*}
\int\limits_{\Omega}\rho_1(t,x)dx \leq  \int\limits_{\Omega}\rho_1^{in}(x)dx,\quad \forall t\geq 0.
\end{equation*} 
which gives a first \textit{a priori} estimate stating that the mass $\rho_1$ lies in $L^1(\Omega_T)$. 

Before proceeding further, let us introduce a precise definition of weak solutions, following~\cite{LauMis02}, that we will use throughout this paper.
\begin{definition}\label{def:sol_faible}
A global weak solution $c=\left(c_i\right)_{i\in\N^*}$ to \eqref{eq:syst_coag-frag}-\eqref{hyp:modele} is a sequence of nonnegative functions $c_i:[0,+\infty)\times\Omega\to[0,+\infty)$ such that, for all $i\in\N^*$ and $T>0$
\begin{itemize}
\item $c_i\in \mathcal{C}\left([0,T];L^1(\Omega)\right)$,
\item $Q^-_i(c),F^+_i(c)\in L^1(\Omega_T)$,
\item $\sup\limits_{t\geq 0}\int\limits_{\Omega}\rho_1(t,x)dx \leq \int\limits_{\Omega}\rho_1^{in}(x)dx$,
\item $c_i$ is a mild solution to the $i$-th equation in~\eqref{eq:syst_coag-frag}, that is
\begin{equation*}
\label{eq:def_ci_integral}
c_i(t)=e^{d_iA_1t}c_i^{in} + \int_0^t e^{d_iA_1(t-s)}\left(Q_i(c(s))+F_i(c(s))\right)ds,
\end{equation*}
where $Q_i$ and $F_i$ are defined by~\eqref{def:Q} and \eqref{def:F}, $A_1$ is the closure in $L^1(\Omega)$ of the unbounded, linear, self-adjoint operator $A$ of $L^2(\Omega)$ defined by
\begin{equation*}
D(A)=\left\{w\in H^2(\Omega),\ \nabla w\cdot \nu=0 \emph{ on }\partial\Omega \right\}, \qquad Aw=\Delta w,
\end{equation*}
and $t\mapsto e^{d_iA_1t}$ is the $\mathcal{C}^0$-semigroup generated by $d_iA_1$ in $L^{1}(\Omega)$.
\end{itemize}
\end{definition}
In \cite{LauMis02}, existence of weak solutions to \eqref{eq:syst_coag-frag}-\eqref{hyp:modele} was proven under the following assumption on the asymptotic behavior of the coagulation and fragmentation coefficients:
\begin{equation}
\label{hyp:exist_LM}
\lim\limits_{j\to\infty} \frac{a_{i,j}}{j} = 0 = \lim\limits_{j\to\infty} \frac{B_{i+j}\beta_{i+j,i}}{i+j}, \quad \forall~i\in\N^*.
\end{equation}
This existence result relies on a sequence of truncated versions of the system \eqref{eq:syst_coag-frag}-\eqref{hyp:modele}, for which existence of global smooth solutions is known. Some compactness argument is then used to extract a solution of the full system~\eqref{eq:syst_coag-frag}-\eqref{hyp:modele}. We detail this procedure in Section~\ref{sec:preuve_frag_forte} (see also \cite{LauMis02,Wrz97,Wrz04}).

Within the existence framework of \cite{LauMis02}, equation \eqref{eq:rho_1} was rewritten in \cite{CanDevFel10}, as
\begin{equation}
\label{eq:rho_1_M}
\partial_t \rho_1 - \Delta_x\left(M_1\rho_1\right) =0, \quad \text{where}\quad M_1=\frac{\sum_{i=1}^{\infty} i d_ic_i}{\sum_{i=1}^{\infty} i c_i},
\end{equation}
and duality techniques were then used to get another \textit{a priori} estimate on the mass $\rho_1$, namely that it lies in $L^2(\Omega_T)$, provided that
\begin{equation*}
\inf\limits_{i\in\N^*}d_i>0 \quad \text{and}\quad \sup\limits_{i\in\N^*}d_i <\infty,
\end{equation*}
and that the coagulation coefficients are strictly sublinear, i.e.
\begin{equation}
\label{eq:sub_lin}
a_{i,j}\leq C(i^{\alpha}j^{\beta}+i^{\beta}j^{\alpha}), \quad \text{with } \alpha+\beta<1, \quad \forall i,j\in\N^*.
\end{equation}
Thanks to new duality estimates from \cite{CanDevFel14}, it was recently proven in \cite{BreDevFel16}, in the pure coagulation case and still assuming~\eqref{eq:sub_lin}, that moments $\rho_k$ of any order in fact lie in every $L^p(\Omega_T)$, $p<+\infty$, if the initial moments $\rho_0^k$ lie in every $L^p(\Omega)$ and under some additional assumption (see~\eqref{hyp:convergence_di}) on the diffusion rates $d_i$. For other applications of these "duality estimates", see the survey~\cite{Pie10} and the references therein. 

We mention that similar results of propagation of moments were also obtained in \cite{Rez10,Rez14}, with different techniques and a slightly different set of hypothesis. The main result of \cite{BreDevFel16} is then to deduce propagation of smoothness of the solutions of \eqref{eq:syst_coag-frag}-\eqref{hyp:modele} (Theorem~\ref{th:all_moments_w} with no fragmentation) from these estimates in $L^p(\Omega_T)$ on the mass $\rho_1$. 
\medskip

In this paper, we investigate other consequences of the $L^p$ estimates of \cite{BreDevFel16}, this time in presence of fragmentation. Our main theorem states that, when the fragmentation is strong enough compared to the coagulation, we have creation and propagation of superlinear moments. This allows us to deduce that strong enough fragmentation can prevent gelation. To put this result in perspective, let us give a brief review (which is far from being exhaustive) of some of the main known results concerning the mass conserving solutions and the occurrence of gelation. For the homogeneous case, as long as the coagulation coefficients are sublinear, i.e.
\begin{equation*}
a_{i,j}\leq C(i+j), \quad \forall i,j\in\N^*,
\end{equation*}
the solutions of \eqref{eq:syst_coag-frag}-\eqref{hyp:modele} are mass conserving, that is \eqref{eq:non_gelation} rigorously holds (see for instance \cite{BalCar90}). In the inhomogeneous case, it was shown in \cite{CanDevFel10} that mass conservation still holds at least for strictly sublinear coagulation coefficients \eqref{eq:sub_lin}, the limit case $a_{i,j}=i+j$ still being open. However, it is known in the homogeneous case that gelation occurs as soon as the coagulation coefficients are strictly superlinear if there is no fragmentation (see \cite{HenErnZif83} for instance). Still in the homogeneous case, it was proven that having strong enough fragmentation could prevent this gelation phenomenon and ensure the conservation of mass even for superlinear coagulation coefficients (see \cite{Car92,Cos95}, and also \cite{EscLauMisPer03} for the continuous case). Here we prove that the same result holds for the inhomogeneous system with diffusion, under an additional assumption on the diffusion rates $d_i$:
\begin{equation}
\label{hyp:convergence_di}
d_i>0,\ \forall~i\in\N^* \quad \text{and}\quad d_i\underset{i\to \infty}{\longrightarrow}d_{\infty}>0.
\end{equation}
Here is the precise statement of our result.
\begin{theorem}\label{th:strong_frag}
Let $\Omega$ be a smooth bounded domain of $\R^N$, $N\leq 2$. Assume the following behavior for the coagulation and fragmentation coefficients:
\begin{equation}
\label{hyp:coef_frag_forte}
a_{i,j} \leq C_Q\left(i^{\alpha}j^{\beta}+i^{\beta}j^{\alpha}\right) \quad \text{and}\quad B_i\geq C_Fi^{\gamma},
\end{equation}
where $C_Q,C_F>0$, $0\leq\alpha,\beta\leq 1$, $\gamma\geq 1$ and $\alpha+\beta<\gamma$. Assume also \eqref{hyp:convergence_di}, together with 
\begin{equation}
\label{hyp:exist_frag_forte}
K_i^Q:=\sup\limits_{j\in\N^*}\frac{a_{i,j}}{j} <+\infty, \quad K_i^F:=\sup\limits_{j\in\N^*} \frac{B_{i+j}\beta_{i+j,i}}{i+j} <+\infty,\quad \forall~i\in\N^*.
\end{equation}
Finally, assume that $c_i^{in}$ lies in $L^{\infty}(\Omega)$ for all $i\geq 1$, that for some $k>1$ also satisfying $k>2-(\gamma-\alpha)$ and $k>2-(\gamma-\beta)$, $\rho_k^{in}$ lies in $L^1(\Omega)$, and that $\rho_1^{in}$ lies in $L^p(\Omega)$ for all $p<\infty$.
\par
Then there exists a weak solution to the coagulation-fragmentation system \eqref{eq:syst_coag-frag}-\eqref{hyp:modele}, whose moments satisfy
\begin{equation}
\label{eq:moments_strong_frag}
\int_0^T t^{m-1}\int\limits_{\Omega}\rho_{k+m(\gamma-1)}(t,x)dtdx < \infty,\quad\forall~m\in\N^*.
\end{equation}
In particular, the superlinear moment $\rho_{k+\gamma-1}$ lies in $L^1(\Omega_T)$, and thus gelation can not occur.
\end{theorem}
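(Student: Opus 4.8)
The plan is to work first with the truncated system, for which global smooth solutions exist, prove all the moment estimates \eqref{eq:moments_strong_frag} uniformly in the truncation parameter, and then pass to the limit via the compactness machinery of \cite{LauMis02} sketched in Section~\ref{sec:preuve_frag_forte}. The heart of the matter is the a priori estimate, so I will concentrate on that. Fix a convex superlinear weight; the natural choice is $\varphi_i = i^k$ (or, to get the iterated moments, $\varphi_i = i^{k+m(\gamma-1)}$). Testing the weak formulation \eqref{eq:form_faible_Q}--\eqref{eq:form_faible_F} against $\varphi$ and integrating over $\Omega$, the diffusion term integrates against the Neumann condition and the main balance reads
\begin{equation*}
\frac{d}{dt}\int_\Omega \rho_k + \int_\Omega \sum_{i\geq 2} B_i c_i\Big(i^k - \sum_{j=1}^{i-1}\beta_{i,j}j^k\Big) = \frac12\int_\Omega\sum_{i,j} a_{i,j}c_ic_j\big((i+j)^k - i^k - j^k\big).
\end{equation*}
The first step is to show the fragmentation gain term on the left is coercive: by convexity of $x\mapsto x^k$ for $k>1$ together with the mass constraint $i=\sum_j j\beta_{i,j}$, one has $\sum_{j<i}\beta_{i,j}j^k \leq i^{k-1}\sum_j j\beta_{i,j}\cdot(\text{something})$ — more precisely one checks $i^k - \sum_{j<i}\beta_{i,j}j^k \geq \big(1-2^{1-k}\big)i^k$ or a similar fixed positive fraction of $i^k$, using that fragmentation sends mass to pieces of size $\leq i-1$. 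Combined with $B_i\geq C_F i^\gamma$ this produces a genuine damping term bounded below by $c\int_\Omega \sum_i i^{k+\gamma-1}c_i = c\int_\Omega \rho_{k+\gamma-1}$.

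The second step is to dominate the coagulation term by the fragmentation dissipation. Using $(i+j)^k - i^k - j^k \leq C_k\big(i^{k-1}j + i j^{k-1}\big)$ and the bound $a_{i,j}\leq C_Q(i^\alpha j^\beta + i^\beta j^\alpha)$, the right-hand side is controlled by $\int_\Omega \sum_{i,j}\big(i^{\alpha+k-1}j^{\beta+1} + \cdots\big)c_ic_j$, i.e. by a product of moments whose orders sum to $k+\alpha+\beta$. Since $\alpha+\beta<\gamma$, interpolation lets us split this as $\rho_{1}$ (controlled in every $L^p$ by hypothesis and by the duality estimate of \cite{BreDevFel16}) against a moment of order strictly less than $k+\gamma-1$; a Young inequality then absorbs the latter into $\varepsilon\int_\Omega\rho_{k+\gamma-1}$ from the dissipation, leaving a term of the form $C\int_\Omega \rho_1^p\cdot\rho_{\text{low}}$ that is integrable in time by the $L^p(\Omega_T)$ control on $\rho_1$. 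This is where the dimensional restriction $N\leq 2$ and hypothesis \eqref{hyp:convergence_di} enter: they are exactly what is needed for the duality lemma to place $\rho_1$ in every $L^p(\Omega_T)$, and $\rho_1$ is what pays for the coagulation loss.

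The third step is the bootstrap giving the $t^{m-1}$ weights. Once $\rho_k\in L^1(\Omega_T)$ and $\rho_{k+\gamma-1}\in L^1(\Omega_T)$ are in hand, one redoes the computation with $\varphi_i = t^{m-1} i^{k+(m-1)(\gamma-1)}$: the extra $(m-1)t^{m-2}$ from $\partial_t$ reproduces the previously-controlled moment of order $k+(m-2)(\gamma-1)\cdot(\cdots)$, the fragmentation dissipation now controls $t^{m-1}\rho_{k+m(\gamma-1)}$, and the coagulation term is again absorbed because $\alpha+\beta<\gamma$ keeps the gap favorable at every stage; the singular weight $t^{m-1}$ near $t=0$ is what makes this an assertion about creation (not just propagation) of moments, so no assumption on $\rho_{k+m(\gamma-1)}^{in}$ for $m\geq 2$ is needed. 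The final step is standard: these bounds are uniform along the truncated approximants, so the weak limit constructed as in \cite{LauMis02} inherits \eqref{eq:moments_strong_frag} by Fatou; and $\rho_{k+\gamma-1}\in L^1(\Omega_T)$ with $k+\gamma-1>1$ gives enough uniform integrability of $(i c_i)_i$ to pass to the limit in the mass identity \eqref{eq:rho_1}, ruling out gelation.

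I expect the main obstacle to be the second step — making the absorption of the superlinear coagulation term fully rigorous at the level of the truncated system, i.e. verifying that the interpolation/Young splitting produces only moments of order $\leq k+\gamma-1$ (so they can be absorbed) and powers of $\rho_1$ of finite order (so the $L^p(\Omega_T)$ estimate applies), uniformly in the truncation. The constraints $k>1$, $k>2-(\gamma-\alpha)$ and $k>2-(\gamma-\beta)$ should be precisely the book-keeping conditions that make this splitting work and that guarantee the lowest moment appearing is still controlled by the initial data and the basic $L^1$ mass bound.
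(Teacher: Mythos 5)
Your overall strategy coincides with the paper's: work on the truncated system, test with $i^l$, extract a fragmentation dissipation of order $\rho_{l+\gamma-1}$, interpolate the coagulation term between $\rho_1$ (controlled in every $L^p(\Omega_T)$ by the duality estimate, which is indeed where $N\leq 2$, \eqref{hyp:convergence_di} and $c_i^{in}\in L^\infty$ enter) and $\rho_{l+\gamma-1}$ with exponent strictly less than one, close the estimate, bootstrap with the weight $t^{m-1}$, and pass to the limit using the superlinear moment bound for the tail control (the paper packages this last step as Proposition~\ref{prop:existence}, case (ii) --- note that \cite{LauMis02} itself is \emph{not} applicable here since \eqref{hyp:exist_LM} fails when $\alpha=1$ or $\beta=1$; only the weaker \eqref{hyp:exist_frag_forte} plus the $L^1$ bound on a superlinear moment saves the compactness argument). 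Your use of Young's inequality to absorb $(\,\cdot\,)^{1-\theta}$ into the dissipation is an innocuous variant of the paper's elementary Lemmas~\ref{lem:elem1} and~\ref{lem:elem2}; it requires the same integrability $\rho_1\in L^p(\Omega_T)$ with $p=1+\frac{\gamma+l-2}{\gamma-(\alpha+\beta)}$ and works equally well in the weighted bootstrap.

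There is, however, one concrete error in your first step. The claimed coercivity
\begin{equation*}
i^k-\sum_{j=1}^{i-1}\beta_{i,j}j^k \;\geq\; \bigl(1-2^{1-k}\bigr)\,i^k
\end{equation*}
is \emph{false} in general under \eqref{hyp:modele} alone: it is precisely the additional hypothesis \eqref{hyp:beta} discussed in Remark~\ref{rem:beta}, and it fails for instance for Becker--D\"oring-type fragmentation ($\beta_{i,1}=\beta_{i,i-1}=1$), where the left-hand side is only of order $k\,i^{k-1}$. The correct bound obtainable from mass conservation $i=\sum_j j\beta_{i,j}$ and convexity is one power of $i$ weaker, namely $i^k-\sum_j \beta_{i,j}j^k\geq \min(k-1,1)\,i^{k-1}$ (this is \eqref{eq:borne_beta} in the paper), and this loss of one power is exactly why the theorem assumes $\alpha+\beta<\gamma$ rather than the $\alpha+\beta<\gamma+1$ one would get under \eqref{hyp:beta}. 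Your proposal is internally inconsistent on this point: you assert a damping $\gtrsim i^{k}\cdot B_i$ but then write the dissipation as $\rho_{k+\gamma-1}$, which corresponds to the weaker (correct) bound. Since all of your subsequent bookkeeping --- the interpolation exponents, the absorption, the bootstrap --- is carried out against $\rho_{k+\gamma-1}$, the error is localized and the rest of the argument survives once the coercivity step is replaced by the convexity estimate above; but as written, that step is wrong and needs to be repaired.
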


Before proceeding further, let us make a few comments about the hypothesis used in Theorem~\ref{th:strong_frag}.
\begin{itemize}
\item We point out that assumption~\eqref{hyp:exist_frag_forte} is more general than assumption~ \eqref{hyp:exist_LM}, which does not hold when $\alpha=1$ or $\beta=1$ in~\eqref{hyp:coef_frag_forte}. Therefore the existence of weak solution could not be obtained by simply applying the results of of~\cite{LauMis02}, and we had to develop new existence results (see Theorem~\ref{th:existence_gene} and Proposition~\ref{prop:existence}). Also the part about the coagulation in \eqref{hyp:exist_frag_forte} is in fact implied by \eqref{hyp:coef_frag_forte}.
\item It will be made apparent in the proof of Theorem~\ref{th:strong_frag} that the creation of moments displayed in~\eqref{eq:moments_strong_frag} is obtained through an iterative procedure (on $m$), and that assuming $\rho_1^{in}\in L^p(\Omega)$ for all $p<\infty$ is needed only if one want to get~\eqref{eq:moments_strong_frag} for all $m\in\N^*$. However, just assuming that $\rho_1^{in}\in L^p(\Omega)$ for a given $p$ can be enough to get at least a bound on one superlinear moment, and thus ensure that gelation can not happen. For instance, if $\rho_1^{in}$ lies in $L^p(\Omega)$ for $p=1+\frac{\gamma+k-2}{\gamma-(\alpha+\beta)}$, $p>2$, then the proof of Theorem~\ref{th:strong_frag} shows that $\rho_{k+\gamma-1}\in L^1(\Omega_T)$. 
\item It is also worth mentioning that in the case where $\rho_1^{in}$ lies in $L^p(\Omega)$ for a given $p$, the restrictions $N\leq 2$ and $c_i^{in}\in L^{\infty}(\Omega)$ for all $i\geq 1$ can be dropped, provided than \eqref{hyp:convergence_di} is replaced by a different \textit{closeness} assumption on the diffusion rates $d_i$ (depending on $p$) which we will introduce later \eqref{hyp:closeness}. This will be explained in Section~\ref{sec:preuve_frag_forte} along the proof of Theorem~\ref{th:strong_frag} (see Remark~\ref{rem:assumption_th2}).
\item Finally, we point out that our assumption $\alpha+\beta<\gamma$, which prescribes how strong the fragmentation should be compared to the coagulation, is more restrictive than the one made in \cite{Cos95} for the homogeneous case, where only $\alpha+\beta-1<\gamma$ is assumed. However, this difference is not related to the fact that our model includes spatial inhomogeneity, but comes from the fact that \cite{Cos95} only studies the case of binary fragmentation (where one cluster only breaks into exactly two smaller ones) and makes an additionnal assumption on the \emph{distribution of the fragmentation} (represented in our cases by the coefficients $\beta_{i,j}$). If we also make an additionnal assumption on the $\beta_{i,j}$, similiar to the one of~\cite{Cos95}, we can show that Theorem~\ref{th:strong_frag} still holds with the weaker assumption $\alpha+\beta-1<\gamma$ (more details after the proof, in Remark~\ref{rem:beta}). 
\end{itemize}

\medskip
As we pointed out earlier, to treat the stong fragmentation case we have to develop new existence results (see Proposition~\ref{prop:existence}). In the process, we obtain a new proof of existence of weak solutions of~\eqref{eq:syst_coag-frag}-\eqref{hyp:modele} that is also valid in many non strong framentation cases. This proof was already presented in \cite{CanDevFel10} but only in dimension 1. Here the improved duality estimates allow us to treat any dimension. Compared to the existence result of \cite{LauMis02}, our theorem requires an additional assumption~\eqref{hyp:di} on the diffusion coefficients and a more stringent assumption on the initial data ($\rho_1^{in}\in L^{2+\varepsilon}(\Omega)$ instead of $\rho_1^{in}\in L^{1}(\Omega)$), but we then get that $\rho_1$ lies in $L^{2+\varepsilon}(\Omega_T)$. Our proof has the advantage of being rather simple, and its other virtue is that it can be adapted for a variation of the model presented here, where fragmentation is induced by binary collisions between clusters (see \cite{CanDevFel10} and the references therein), leading to new quadratic terms that do not seem to be easy to treat with the inductive approach of \cite{LauMis02}. The precise statement of our existence result is the following:

\begin{theorem}
\label{th:existence_gene}
Let $\Omega$ be a smooth bounded domain of $\R^N$. Assume that the coagulation and fragmentation coefficients satisfy the asymptotic behavior \eqref{hyp:exist_LM} and that the diffusion rates are such that
\begin{equation}
\label{hyp:di}
\inf\limits_{i\in\N^*} d_i>0 \quad \text{and}\quad \sup\limits_{i\in\N^*} d_i <+\infty.
\end{equation}
Assume also that the initial data $c_i^{in}\geq 0$ are such that there exists $p>2$ such that the initial mass $\rho_1^{in}$ lies in $L^p(\Omega)$. 
\par 
Then there exists a weak solution to the coagulation-fragmentation system \eqref{eq:syst_coag-frag}-\eqref{hyp:modele} such that $\rho_1$ lies in $L^p(\Omega_T)$ for all positive $T$.
\end{theorem}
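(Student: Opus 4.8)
The plan is to run the standard truncation--compactness scheme for coagulation--fragmentation systems (as in \cite{LauMis02,CanDevFel10}), but to extract the crucial $L^p(\Omega_T)$ bound on $\rho_1$ directly from the duality lemma applied to the truncated systems, uniformly in the truncation parameter. Concretely, for each $n\in\N^*$ introduce the truncated system obtained by replacing the coagulation and fragmentation kernels by kernels supported on clusters of size $\leq n$ (e.g. $a_{i,j}^{(n)}=a_{i,j}\mathds{1}_{i,j\leq n}$, $B_i^{(n)}=B_i\mathds{1}_{i\leq n}$, $\beta_{i,j}^{(n)}$ adjusted so mass is still conserved), so that the system becomes a finite reaction--diffusion system in the unknowns $(c_1^{(n)},\dots,c_n^{(n)})$ with bounded nonlinearities. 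For these, global existence of nonnegative smooth solutions and nonnegativity/mass conservation ($\int_\Omega \rho_1^{(n)}$ constant) are classical; moreover $\rho_1^{(n)}$ is a genuine sequence sum (finite) and satisfies the mass equation $\partial_t\rho_1^{(n)}-\Delta_x(M_1^{(n)}\rho_1^{(n)})=0$ with $M_1^{(n)}=\frac{\sum i d_i c_i^{(n)}}{\sum i c_i^{(n)}}$, where by \eqref{hyp:di} we have $0<\inf_i d_i\leq M_1^{(n)}\leq \sup_i d_i<\infty$ pointwise, uniformly in $n$.

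The first key step is the duality estimate. Since $M_1^{(n)}$ lies between two fixed positive constants uniformly in $n$, the improved duality lemma (the $L^p$ version underlying the results of \cite{CanDevFel14,BreDevFel16} that are invoked in the excerpt) applied to the equation $\partial_t u-\Delta_x(Mu)=0$ with $0<\mathbf{a}\leq M\leq \mathbf{b}$ and $u(0)=\rho_1^{in}\in L^p(\Omega)$ yields a bound
\[
\|\rho_1^{(n)}\|_{L^p(\Omega_T)}\leq \constC(T,p,N,\mathbf{a},\mathbf{b})\,\|\rho_1^{in}\|_{L^p(\Omega)},
\]
for the given $p>2$, with a constant independent of $n$. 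This is the heart of the matter: the hypothesis $p>2$ is exactly what is needed for the $L^p$ duality lemma to apply in the parabolic $L^2$--duality framework, and \eqref{hyp:di} is what makes the bounds on $M_1^{(n)}$ uniform. I would state this as a lemma (or cite it from the earlier sections) and simply apply it.

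The second step is compactness and passage to the limit. From the uniform $L^p(\Omega_T)$ bound on $\rho_1^{(n)}$ one extracts, for each fixed $i$, uniform bounds: $c_i^{(n)}\leq \rho_1^{(n)}$ controls $\|c_i^{(n)}\|_{L^p(\Omega_T)}$; the assumption \eqref{hyp:exist_LM} on the tails of $a_{i,j}$ and $B_{i+j}\beta_{i+j,i}$ gives, for each fixed $i$, that $Q_i^-(c^{(n)})$ and $F_i^+(c^{(n)})$ are controlled in $L^1(\Omega_T)$ (uniformly in $n$) by linear expressions in $\rho_1^{(n)}$ and a few low-index $c_j^{(n)}$ — this is the standard argument from \cite{LauMis02}. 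Using the Duhamel formula with the analytic semigroup $e^{d_iA_1 t}$ and these $L^1$/$L^p$ bounds, together with parabolic regularity, one obtains time-equicontinuity in $L^1(\Omega)$ and enough compactness (e.g.\ via an Aubin--Lions type argument, or the weak $L^1$-compactness / Dunford--Pettis argument of \cite{LauMis02}) to extract, along a subsequence and by a diagonal extraction over $i$, functions $c_i\in\C([0,T];L^1(\Omega))$ with $c_i^{(n)}\to c_i$ in $\C([0,T];L^1(\Omega))$ (and $c_i^{(n)}\rightharpoonup c_i$ in $L^p(\Omega_T)$). The nonnegativity passes to the limit, the mass inequality $\sup_t\int_\Omega\rho_1\leq \int_\Omega\rho_1^{in}$ follows from Fatou, and the uniform $L^p$ bound passes to the limit by weak lower semicontinuity of the norm, giving $\rho_1\in L^p(\Omega_T)$. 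Finally one checks that $(c_i)$ solves the mild formulation of Definition~\ref{def:sol_faible}: this is done coordinate by coordinate, passing to the limit in the Duhamel identity for the truncated system — the linear semigroup term is immediate, and the convergence of the nonlinear terms $Q_i(c^{(n)})\to Q_i(c)$, $F_i(c^{(n)})\to F_i(c)$ in $L^1(\Omega_T)$ uses the strong $L^1$ convergence of each $c_j^{(n)}$ together with a uniform-integrability/tail control coming from $\rho_1^{(n)}$ bounded in $L^p$ with $p>1$ and the decay hypothesis \eqref{hyp:exist_LM} (the standard truncation-of-the-series estimate).

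The main obstacle I anticipate is the passage to the limit in the fragmentation gain term $F_i^+(c^{(n)})=\sum_{j\geq 1}B_{i+j}^{(n)}\beta_{i+j,i}^{(n)}c_{i+j}^{(n)}$ and the coagulation loss term $Q_i^-(c^{(n)})=\sum_j a_{i,j}^{(n)}c_i^{(n)}c_j^{(n)}$: these are infinite series in the limit, and one must show the tails are uniformly small in $L^1(\Omega_T)$. This is precisely where \eqref{hyp:exist_LM} enters — it lets one bound the tail $\sum_{j\geq J}B_{i+j}\beta_{i+j,i}c_{i+j}^{(n)}\leq \varepsilon_J\sum_{j\geq J}(i+j)c_{i+j}^{(n)}\leq \varepsilon_J\,\rho_1^{(n)}$ with $\varepsilon_J\to 0$, and similarly for the coagulation loss — and the uniform $L^p$ (hence uniformly integrable in $L^1$) control of $\rho_1^{(n)}$ closes the argument. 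Everything else is routine; the only genuinely new input beyond \cite{LauMis02} is replacing their inductive $L^\infty$/weighted-$\ell^1$ bootstrap by the single clean $L^p$ duality estimate, which is what allows the proof to work in arbitrary dimension $N$ under the closeness-free assumption \eqref{hyp:di}.
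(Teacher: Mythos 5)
Your overall architecture --- truncate, derive a uniform $L^p(\Omega_T)$ bound on the truncated mass $\rho_1^n$ by duality, then pass to the limit via $L^1$ compactness, interpolation, and the tail control provided by \eqref{hyp:exist_LM} --- is exactly the paper's (Proposition~\ref{cor:proche_2} combined with Proposition~\ref{prop:existence}, case (i)), and your compactness and limit-passage step is essentially correct. There is, however, a genuine gap in your justification of the duality step. Lemma~\ref{prop:duality} does not apply for an arbitrary $p$ under \eqref{hyp:di} alone: it requires the closeness condition
\begin{equation*}
\frac{b-a}{b+a}\,\constC_{\frac{a+b}{2},p'}<1,
\end{equation*}
which becomes \emph{more} restrictive as $p$ grows. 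Your claim that ``the hypothesis $p>2$ is exactly what is needed for the $L^p$ duality lemma to apply'' inverts the actual logic: the condition is automatic at $p=2$ (because $\constC_{m,2}\leq 1$ and $\frac{b-a}{b+a}<1$), and it extends to an interval $[2,p_0[$ with $p_0>2$ only by continuity of $p\mapsto\constC_{m,p'}$; for large $p$ it may simply fail when $\sup_i d_i/\inf_i d_i$ is large, and then no uniform $L^p$ bound on $\rho_1^n$ is available by this route. The hypothesis $p>2$ is in fact consumed downstream, in the compactness argument, where boundedness of $(c_i^n)_n$ in $L^p$ with $p>2$ combined with strong $L^1$ convergence yields strong $L^2$ convergence by interpolation --- which is what makes the quadratic coagulation terms converge in $L^1$.

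The repair is the paper's Proposition~\ref{cor:proche_2}: pick $q\in\,]2,\min(p,p_0)[$, observe that $\rho_1^{in}\in L^q(\Omega)$, and run your argument with $q$ in place of $p$; the duality bound, the interpolation, and the tail estimates all go through since $q>2$. Note that this only produces $\rho_1\in L^q(\Omega_T)$ when $p\geq p_0$; recovering the conclusion for the full exponent $p$ requires an additional hypothesis such as \eqref{hyp:closeness} or \eqref{hyp:convergence_di} (via Proposition~\ref{th:first_moment} or \ref{th:first_moment_convergence}). Apart from this point, your proposal follows the paper's proof.
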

\begin{remark}
We point out that the only assumption on the coagulation coefficients in Theorem~\ref{th:existence_gene} is~\eqref{hyp:exist_LM}, therefore this result includes situations where gelation can occur.
\end{remark}

\noindent Finally, it was shown in~\cite{BreDevFel16}, with sublinear coagulation and no fragmentation, that having a control on all moments $\rho_k$, $k\geq 1$, in all $L^p(\Omega_T)$, $p<\infty$, was enough to get a result of propagation of smoothness for solutions of~\eqref{eq:syst_coag-frag}-\eqref{hyp:modele}. Since we have such a control in the strong fragmentation case thanks to Theorem~\ref{th:strong_frag}, we are now able to complete this smoothness result so that it encompasses nearly all situations where it is known that gelation can not happen. 

\begin{theorem}\label{th:all_moments_w}
Let $\Omega$ be a smooth bounded domain of $\R^N$, $N\leq 2$, and $T>0$. Make the assumption \eqref{hyp:convergence_di} of convergence of the diffusion coefficients and assume the following behavior for the coagulation and fragmentation coefficients:
\begin{equation*}
a_{i,j} \leq C_Q\left(i^{\alpha}j^{\beta}+i^{\beta}j^{\alpha}\right) \quad \text{and}\quad B_i\leq C_{max}i^{\gamma_{max}},
\end{equation*}
where $0\leq\alpha,\beta\leq 1$. Assume also that the initial data $c_i^{in}\geq 0$ are of class $\C^{\infty}(\overline \Omega)$, compatible with the boundary conditions and that for all $k \in \N^*$ the initial moments $\rho_k^{in}$ are of class $\C^{\infty}(\overline \Omega)$. Finally, assume that we are in one of the two following cases.

\medskip
\noindent\textsc{Sublinear coagulation case:} \eqref{hyp:exist_LM} holds and $\alpha+\beta<1$.

\medskip
\noindent\textsc{Strong fragmentation case:} \eqref{hyp:exist_frag_forte} holds and there exists $C_F>0$ and $\gamma\geq 1$, $\gamma>\alpha+\beta$, such that $B_i\geq C_F i^{\gamma}$ for all $i\in\N^*.$

\medskip
Then there exists a smooth solution to the coagulation-fragmentation system \eqref{eq:syst_coag-frag}-\eqref{hyp:modele} such that each $c_i$ is of class $\C^{\infty}(\overline\Omega_T)$, and such that the moments $\rho_k$ are also of class $\C^{\infty}(\overline\Omega_T)$, for any $k\in\N^*$. Besides, if $\sup\limits_{i\in\N^*} B_i<\infty$, the solution is unique. 
\end{theorem}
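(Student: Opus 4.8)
The plan is to feed the moment estimates already at our disposal into a parabolic bootstrap, in the spirit of \cite{BreDevFel16} but now carrying along the fragmentation terms; I would split the argument into producing a weak solution with an integrable mass, upgrading this to control of every moment in every $L^p(\Omega_T)$, and finally bootstrapping to $\C^\infty$. For the first point, in the sublinear coagulation case one applies Theorem~\ref{th:existence_gene} (hypothesis~\eqref{hyp:exist_LM} is granted, \eqref{hyp:di} follows from \eqref{hyp:convergence_di} since a convergent sequence of positive reals with positive limit is bounded away from $0$ and from $\infty$, and $\rho_1^{in}\in L^p(\Omega)$ for some $p>2$ because $\rho_1^{in}$ is smooth); in the strong fragmentation case one invokes Proposition~\ref{prop:existence}. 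In either case we obtain a weak solution with $\rho_1\in L^p(\Omega_T)$ for every $p<\infty$, which is where $N\le 2$ together with \eqref{hyp:convergence_di} is used, so that the improved duality lemma delivers an arbitrarily large exponent.

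For the control of moments, in the sublinear case this is the moment analysis of \cite{BreDevFel16}; the extra fragmentation is harmless, since testing \eqref{eq:form_faible_F} against $\varphi_i=i^k$ gives a nonpositive contribution when $k>1$ by convexity, and $B_i\le C_{max}i^{\gamma_{max}}$ keeps the loss terms finite at each fixed $i$. In the strong fragmentation case one re-runs the mechanism of Theorem~\ref{th:strong_frag}: against $\varphi_i=i^k$ the coagulation contribution is bounded, via \eqref{hyp:coef_frag_forte} and $(i+j)^k-i^k-j^k\lesssim ij^{k-1}+i^{k-1}j$, by terms such as $\rho_{\alpha+1}\rho_{k+\beta-1}$, while the fragmentation contribution $-\sum_i B_ic_i\bigl(i^k-\sum_j\beta_{i,j}j^k\bigr)$ is $\le -C_F\rho_{k+\gamma}$ plus lower-order moments; since $\alpha+\beta<\gamma$ one has $(\alpha+1)+(k+\beta-1)<k+\gamma$, so a Young/interpolation inequality lets the dissipation $\rho_{k+\gamma}$ absorb the coagulation gain up to a power of $\rho_1$, which already lies in every $L^p$. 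As the initial moments are smooth, hence in every $L^p(\Omega)$, the time-weights of~\eqref{eq:moments_strong_frag} are not needed, and an induction on $k$ gives $\rho_k\in L^p(\Omega_T)$ for every $k\ge1$ and $p<\infty$.

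For the regularity bootstrap, one first smooths the mass: rewrite~\eqref{eq:rho_1_M} as $\partial_t\rho_1-d_\infty\Delta_x\rho_1=\Delta_x\bigl(\sum_{i\ge1}i(d_i-d_\infty)c_i\bigr)$, note that the source inside the Laplacian lies in every $L^p(\Omega_T)$ by the moment bounds, and conclude $\rho_1\in\C^\infty(\overline{\Omega}_T)$ by $L^p$ parabolic maximal regularity followed by Schauder estimates (compatibility of the initial data with the Neumann condition yields regularity up to $t=0$ and $\partial\Omega$). One then obtains $c_i\in\C^\infty(\overline{\Omega}_T)$ by induction on $i$: $Q_i^+(c)$ is a finite sum of products of the already-smooth $c_1,\dots,c_{i-1}$; $Q_i^-(c)=c_i\sum_j a_{i,j}c_j$ and $F_i^-(c)=B_ic_i$ are $c_i$ times a smooth, resp.\ constant, factor; and the new term $F_i^+(c)=\sum_j B_{i+j}\beta_{i+j,i}c_{i+j}$ satisfies, with $K_i^F$ from~\eqref{hyp:exist_frag_forte}, $|\partial^\sigma F_i^+(c)|\le K_i^F\sum_j(i+j)|\partial^\sigma c_{i+j}|$, so it is controlled once the moments of the derivatives of $(c_j)$ are. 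To get the latter I would differentiate the whole system: for $u_i:=\partial^\sigma c_i$ one obtains a linear system $\partial_t u_i-d_i\Delta_x u_i=(\text{coagulation-fragmentation-type terms in }u\text{ with the }c_j\text{ as coefficients})+(\text{lower-order derivatives})$, which retains enough of the monotone structure of~\eqref{eq:syst_coag-frag} for the duality lemma and moment estimates to run with the (already controlled) moments of $c$ as data; an induction on $|\sigma|$ then places every moment of every derivative in every $L^p(\Omega_T)$. Parabolic $L^p$ and Schauder estimates close the induction on $i$, and (using $N\le2$) the Sobolev embeddings make $\sum_i i^k\partial^\sigma c_i$ converge locally uniformly, whence $\rho_k\in\C^\infty(\overline{\Omega}_T)$ for all $k$. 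Finally, when $\sup_i B_i<\infty$ the fragmentation is a bounded perturbation, and together with the coagulation bound a Gronwall estimate for $\sum_i i\,\|c_i-\tilde c_i\|_{L^1(\Omega)}$ (or an $L^2$ variant) between two solutions gives uniqueness, as in \cite{LauMis02}.

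The main obstacle is the fragmentation gain term $F_i^+(c)$ in the bootstrap: unlike its coagulation analogue it couples $c_i$ to \emph{all} larger clusters, so it cannot be absorbed into an induction on cluster size and forces one to propagate moment bounds for the derivatives of the entire family $(c_i)$, i.e.\ to differentiate the infinite system and redo the duality/moment analysis at each order of differentiation. Checking that the differentiated systems keep the structure required by the improved duality lemma, and tracking which moments of $c$ are needed to close each level of the induction, is where the real work lies, and it is the point at which the strong-fragmentation case genuinely departs from the pure-coagulation argument of \cite{BreDevFel16}.
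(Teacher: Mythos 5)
Your existence step, your moment-propagation step and your uniqueness argument all match the paper's strategy (the paper's uniqueness proof is the same Gronwall estimate on $\int_\Omega\sum_i i\vert c_i-\tilde c_i\vert$ with test functions $\varphi_i=i\sgn(c_i-\tilde c_i)$, taken from \cite{HamRez07}; and in the strong fragmentation case the paper likewise combines Theorem~\ref{th:strong_frag} with $\rho_1\in L^p$ for all $p$ and interpolation — note only that the fragmentation dissipation produced by \eqref{eq:borne_beta} is $\rho_{k+\gamma-1}$, not $\rho_{k+\gamma}$, which is why the threshold is $\alpha+\beta<\gamma$ rather than $\alpha+\beta<\gamma+1$). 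The problem is your regularity bootstrap. You correctly identify that $F_i^+(c)=\sum_j B_{i+j}\beta_{i+j,i}c_{i+j}$ couples $c_i$ to all larger clusters and so blocks an induction on cluster size, but your proposed remedy — differentiate the whole infinite system and redo the duality/moment analysis for $u_i=\partial^\sigma c_i$ — is not justified and would very likely fail. The duality lemmas (Lemmas~\ref{prop:duality} and \ref{prop:duality_rhs}) require a \emph{nonnegative} unknown satisfying a divergence-form inequality $\partial_t u-\Delta_x(Mu)\leq \mu_1 u+\mu_2$ with $a\leq M\leq b$; the derivatives $\partial^\sigma c_i$ are not signed, the quantity $\sum_i i\,\partial^\sigma c_i$ does not satisfy such a closed inequality (the quadratic coagulation terms differentiate into products $\partial^{\sigma'}c_i\,\partial^{\sigma''}c_j$ with no sign), and the cancellation $\sum_i i(Q_i+F_i)=0$ that makes the mass equation \eqref{eq:rho_1_M} usable is destroyed by differentiation. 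You flag this verification as "where the real work lies", but it is precisely the step that does not go through, so the proposal as written has a genuine gap at the heart of the smoothness argument.

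The paper avoids all of this with a much more elementary device (Lemma~\ref{lem:ci_to_Fi} and Proposition~\ref{prop:smooth}): the induction is on the derivative order $s$, carried out \emph{simultaneously for the whole family} $(c_i)$ with all polynomial weights, i.e.\ on the statement $\sup_i\Vert i^k c_i\Vert_{W^{s,p}(\Omega_T)}<\infty$ for all $k$ and $p$. The base case $s=0$ is just $\Vert i^kc_i\Vert_{L^p}\leq\Vert\rho_k\Vert_{L^p}$. For the inductive step one controls the troublesome tail directly: since $\beta_{i+j,i}\leq(i+j)/i$ and $B_{i+j}\leq C(i+j)^{\gamma_{max}}$, one has $\Vert i^kF_i^+(c)\Vert_{W^{s,p}}\leq C\sup_i\Vert i^{\gamma_{max}+k+2}c_i\Vert_{W^{s,p}}\sum_{j\geq 1}(1+j)^{-2}$ — one simply pays two extra powers of $i$, which the arbitrary-$k$ hypothesis supplies for free. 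Then single-equation maximal regularity for the heat operator $\partial_t-d_i\Delta_x$ (uniform in $i$ thanks to \eqref{hyp:di}) upgrades $W^{s,p}$ to $W^{s+1,p}$, and $\rho_k\in W^{s,p}$ follows from $\Vert\rho_k\Vert_{W^{s,p}}\leq\sup_i\Vert i^{k+2}c_i\Vert_{W^{s,p}}\sum_i i^{-2}$. No duality argument and no differentiation of the system is needed beyond the $L^p$ level. Your preliminary step of smoothing $\rho_1$ via $\partial_t\rho_1-d_\infty\Delta_x\rho_1=\Delta_x(\sum_i i(d_i-d_\infty)c_i)$ is also both unnecessary and inconclusive as stated, since the right-hand side is only a distributional Laplacian of an $L^p$ function and maximal regularity gains nothing from it.
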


Let us make a few comments about Theorem~\ref{th:all_moments_w}
\begin{itemize}
\item The $\C^{\infty}$ regularity down to time $0$ requires of course the $\C^{\infty}$ hypothesis on the initial data. However, it can be seen in the various steps of the proof (see Section~\ref{sec:smooth}) that propagation of regularity  in intermediate Sobolev spaces holds under suitable (less stringent) assumptions on the initial data.
\item Since each $c_i$ is solution of a heat equation subject to a r.h.s. that can be controlled once all moments are bounded in $L^p(\Omega_T)$, $p<+\infty$, we can in fact show the creation of regularity for strictly positive times. For example, under the assumption that $\rho_k^{in}\in L^p(\Omega)$ for all $p<+\infty$ and all $k\in\mathbb{N}^*$, we can prove that the concentrations $c_i$  are of class $C^{\infty}(]0,T] \times \bar{\Omega})$.
\item In the strong fragmentation case, we can suppose even less, since we saw in Theorem~\ref{th:strong_frag} that moments bounds in $L^p(\Omega_T)$ are created. Indeed, if we simply assume $\rho_{k_0}^{in}\in L^1(\Omega)$ for a $k_0>1$, \eqref{eq:moments_strong_frag} yields that $\rho_k\in L^1([\varepsilon,T]\times\Omega)$ for all $\varepsilon>0$ and all $k\geq 0$, and this is then enough to show smoothness on $]0,T]\times\Omega$.
\end{itemize}

\medskip
Our paper is organised as follow. In Section~\ref{sec:approx} we present the truncated system we use to appoximate the caogulation-fragmentation system~\eqref{eq:syst_coag-frag}-\eqref{hyp:modele}. We also give sufficient conditions, in terms of \emph{a priori} estimates on the mass $\rho_1$, under which the solutions of the truncated system converge (up to extraction) to weak solutions of the full system~\eqref{eq:syst_coag-frag}-\eqref{hyp:modele}. In Section~\ref{sec:rappels} we then recall some key results from \cite{CanDevFel14} and \cite{BreDevFel16}: improved duality lemmas that can be used to get the \textit{a priori} estimates introduced in the previous section, allowing us to prove Theorem~\ref{th:existence_gene}. In Section~\ref{sec:preuve_frag_forte} we make further use of these \emph{a priori} estimates, and show that they enable creation of higher order moments in the strong fragmentation case, to get the proof of Theorem~\ref{th:strong_frag}. We conclude in Section~\ref{sec:smooth} by showing how control on higher order moments translates into smoothness and prove Theorem~\ref{th:all_moments_w}.

\section{Approximation scheme and existence results}
\label{sec:approx}

In this section, we explain how to obtain weak solutions of the coagulation-fragmentation system~\eqref{eq:syst_coag-frag}-\eqref{hyp:modele} from a truncated system. For all $n\in\N^*$, we consider $c^n=(c_1^n,\ldots,c_n^n)$ the solution of
\begin{align}
\label{eq:approx_system}
\left\{
\begin{aligned}
& \partial_t c_i^n - d_i \Delta_x c_i^n = Q_i^n(c^n) + F_i^n(c^n), \quad &\text{on}& \ \Omega_T, \\
& \nabla_x c_i^n\cdot \nu = 0 \quad &\text{on}& \ [0,T]\times\partial\Omega, \\
& c_i^n(0,\cdot) = c_i^{in} \quad &\text{on}& \ \Omega,
\end{aligned}
\right.
\end{align}
with
\begin{equation}
\label{eq:approx_Q}
Q_i^n(c^n) = Q_i^{+,n}(c^n) - Q_i^{-,n}(c^n) = \frac{1}{2}\sum_{j=1}^{i-1}a_{i-j,j}c_{i-j}^nc_j^n-\sum_{j=1}^{n-i}a_{i,j}c_i^nc_j^n,
\end{equation}
and
\begin{equation}
\label{eq:approx_F}
F_i^n(c^n) = F_i^{+,n}(c^n) - F_i^{-,n}(c^n) = \sum_{j=1}^{n-i}B_{i+j}\beta_{i+j,i}c_{i+j}^n - B_ic_i^n.
\end{equation}
For this reaction diffusion system (of finite dimension and with finite sums in the r.h.s.), existence and uniqueness of nonnegative, global and smooth solution is known (see for instance \cite{Wrz97}). Notice that, if one assumes~\eqref{hyp:modele}, the truncated version of the weak formulation~\eqref{eq:form_faible_Q}-\eqref{eq:form_faible_F} becomes (for any sequence $\left(\varphi_i\right)$)
\begin{equation}
\label{eq:Q_faible_tronq}
\sum_{i=1}^{n} \varphi_iQ_i^n(c^n) = \frac{1}{2}\sum_{\substack{1\leq i,j\leq n \\ i+j\leq n }}a_{i,j}c_i^nc_j^n(\varphi_{i+j}-\varphi_i-\varphi_j),
\end{equation}
and
\begin{equation}
\label{eq:F_faible_tronq}
\sum_{i=1}^{n} \varphi_iF_i^n(c^n) = -\sum_{i=2}^{n}B_ic_i^n\left(\varphi_i-\sum_{j=1}^{i-1}\beta_{i,j}\varphi_j\right).
\end{equation}
For all $k\geq 0$, we then define the moment of order $k$, associated to the solution $c^n$, as 
\begin{equation*}
\rho_k^n=\sum_{i=1}^ni^kc_i^n.
\end{equation*}
We now given sufficient conditions on those moments, under which one can extract from $\left(c^n\right)_n$ a subsequence converging to a solution of~\eqref{eq:syst_coag-frag}-\eqref{hyp:modele}.
\begin{proposition}
\label{prop:existence}
Let $\Omega$ be a smooth bounded domain of $\R^N$. Assume that $d_i>0$ for all $i\in\N^*$, that there exists $p>2$ such that, for all $n\in\N^*$, the mass $\left(\rho_1^n\right)_n$ associated to the solution $c^n$ of~\eqref{eq:approx_system}-\eqref{eq:approx_F} is bounded in $L^p(\Omega_T)$. Assume also that one of the two following statements holds:
\begin{enumerate}[(i)]
\item the coagulation and fragmentation coefficients satisfy the asymptotic behavior given in~\eqref{hyp:exist_LM},
\item the coagulation and fragmentation coefficients satisfy the (weaker) asymptotic behavior given in~\eqref{hyp:exist_frag_forte}, and there exists $k>1$ such that $\left(\rho_k^n\right)_n$ is bounded in $L^1(\Omega_T)$.
\end{enumerate}
Then, up on extraction, $\left(c^n\right)_n$ converges to a weak solution $c$ of~\eqref{eq:syst_coag-frag}-\eqref{def:F}, for which $\rho_1$ lies in $L^p(\Omega_T)$.
\end{proposition}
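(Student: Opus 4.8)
The plan is to obtain the weak solution as a limit of the truncated solutions $c^n$, following the classical compactness strategy (as in \cite{LauMis02,CanDevFel10}), but using the $L^p(\Omega_T)$ bound on $\rho_1^n$ as the crucial extra piece of information that replaces (or weakens) the structural assumptions on the coefficients. First I would record the elementary \emph{a priori} bounds that hold for the truncated system: nonnegativity of each $c_i^n$ (standard for the finite ODE-reaction-diffusion system), and the mass identity obtained by taking $\varphi_i=i$ in \eqref{eq:Q_faible_tronq}--\eqref{eq:F_faible_tronq}, which gives $\partial_t\rho_1^n - \Delta_x(\sum_i i d_i c_i^n)=0$ and hence $\sup_t\int_\Omega \rho_1^n \le \int_\Omega \rho_1^{in}$. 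Together with the hypothesised bound $\|\rho_1^n\|_{L^p(\Omega_T)}\le C$ and $p>2$, this controls $\sum_i i c_i^n$, and in particular each $c_i^n$, in $L^p(\Omega_T)$ uniformly in $n$.

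Next I would upgrade this to compactness of each individual $c_i^n$. Fix $i$. The right-hand side $Q_i^n(c^n)+F_i^n(c^n)$ must be shown bounded in $L^1(\Omega_T)$ (or better) uniformly in $n$: the gain terms $Q_i^{+,n},F_i^{+,n}$ and loss terms $Q_i^{-,n},F_i^{-,n}$ are all estimated using $a_{i,j}\le K_i^Q\, j$ and $B_{i+j}\beta_{i+j,i}\le K_i^F\,(i+j)$ under \eqref{hyp:exist_frag_forte} — note hypothesis \eqref{hyp:exist_LM} implies \eqref{hyp:exist_frag_forte} so case (i) is subsumed here — bounding everything by $\rho_1^n$ times $c_j^n$ or $\rho_0^n$, hence by $(\rho_1^n)^2\in L^{p/2}(\Omega_T)\subset L^1(\Omega_T)$ (here $p>2$ is used). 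Since $c_i^n$ solves a heat equation with $L^1$-bounded (in fact $L^{p/2}$-bounded) source and fixed diffusion $d_i>0$ on a bounded domain, parabolic regularity plus the Aubin--Lions/Dubinski\u\i-type compactness argument (exactly as in \cite{LauMis02}) yields strong convergence, along a subsequence, of $c_i^n\to c_i$ in $\mathcal C([0,T];L^1(\Omega))$ and a.e.\ on $\Omega_T$; a diagonal extraction makes this simultaneous for all $i$.

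It then remains to pass to the limit in the mild formulation. The a.e.\ convergence and Fatou give $\sup_t\int_\Omega\rho_1\le\int_\Omega\rho_1^{in}$ and, with the uniform $L^p$ bound and Fatou again, $\rho_1\in L^p(\Omega_T)$. For the nonlinear terms the issue is uniform integrability: for the loss term $Q_i^{-,n}(c^n)=c_i^n\sum_{j=1}^{n-i}a_{i,j}c_j^n$ one uses $\sum_j a_{i,j}c_j^n\le K_i^Q\rho_1^n$ bounded in $L^p$, $p>2$, so $Q_i^{-,n}$ is bounded in $L^{p/2}$ with $p/2>1$, hence uniformly integrable, and the a.e.\ limit is identified by Vitali (or via monotone/dominated pieces as in the quoted references); $F_i^{-,n}=B_ic_i^n$ is even simpler. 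For the gain terms, $Q_i^{+,n}$ is a \emph{finite} sum in $j$ so strong $L^1$ convergence of each $c_j^n$ handles it directly; $F_i^{+,n}=\sum_{j\ge 1}B_{i+j}\beta_{i+j,i}c_{i+j}^n$ is an infinite sum, and this is where case (i) versus case (ii) matters. Under \eqref{hyp:exist_LM} the classical tail estimate of \cite{LauMis02} applies. Under the weaker \eqref{hyp:exist_frag_forte} one instead uses the extra hypothesis that $(\rho_k^n)_n$ is bounded in $L^1(\Omega_T)$ for some $k>1$: writing the tail $\sum_{j\ge J}B_{i+j}\beta_{i+j,i}c_{i+j}^n\le K_i^F\sum_{j\ge J}(i+j)c_{i+j}^n\le K_i^F J^{-(k-1)}\sum_{\ell}\ell^k c_\ell^n = K_i^F J^{-(k-1)}\rho_k^n$, which is small in $L^1(\Omega_T)$ uniformly in $n$ as $J\to\infty$, giving the required equi-smallness of the tails; combined with strong convergence of each finite truncation this identifies the limit of $F_i^{+,n}$. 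Passing to the limit in $c_i^n(t)=e^{d_iA_1t}c_i^{in}+\int_0^t e^{d_iA_1(t-s)}(Q_i^n+F_i^n)(c^n(s))\,ds$ using the contraction property of the semigroup on $L^1$ then yields that $c=(c_i)$ is a weak solution in the sense of Definition~\ref{def:sol_faible}. The main obstacle is precisely the control of the infinite fragmentation gain term $F_i^{+,n}$ in case (ii): since $\alpha=1$ or $\beta=1$ is allowed, \eqref{hyp:exist_LM} fails and one genuinely needs the superlinear-moment bound $\rho_k^n\in L^1$ to prevent mass from escaping to infinite cluster sizes in the fragmentation gain — everything else is a routine, if careful, adaptation of \cite{LauMis02,CanDevFel10}.
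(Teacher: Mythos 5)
Your overall strategy coincides with the paper's: uniform $L^{1}$ (indeed $L^{p/2}$) bounds on the reaction terms deduced from the $L^{p}$ bound on $\rho_1^n$, strong $L^1(\Omega_T)$ compactness of each $c_i^n$ from the heat equation with bounded right-hand side, a diagonal extraction giving a.e.\ convergence, Fatou to keep $\rho_1\in L^p(\Omega_T)$, and passage to the limit in the reaction terms with tails controlled either by the vanishing ratios of \eqref{hyp:exist_LM} in case (i) or by the superlinear moment in case (ii). Your handling of the fragmentation gain term in case (ii), bounding the tail by $K_i^F J^{-(k-1)}\rho_k^n$ which is small in $L^1(\Omega_T)$ uniformly in $n$, is in fact more direct than the paper's. (A minor imprecision: strong $L^1$ convergence of each factor does not by itself give $L^1$ convergence of the finite quadratic sums in $Q_i^{+,n}$; you need either the interpolated $L^2$ convergence, which follows from the $L^p$ bound with $p>2$, or a.e.\ convergence plus the uniform $L^{p/2}$ integrability you already have.)

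The genuine gap is in the coagulation loss term $Q_i^{-,n}(c^n)=c_i^n\sum_{j=1}^{n-i}a_{i,j}c_j^n$ in case (ii). Uniform integrability (the $L^{p/2}$ bound) is not the difficulty; the difficulty is identifying the limit, i.e.\ showing that the sum with a growing number of terms converges to $\sum_{j\ge 1}a_{i,j}c_j$ in a sense strong enough for Vitali, and you assert this without argument. The naive $L^1$ estimate of the tail contribution,
\begin{equation*}
\Bigl\Vert c_i^n\sum_{j>j_0}a_{i,j}c_j^n\Bigr\Vert_{L^1(\Omega_T)}\le K_i^Q\, j_0^{-(k-1)}\bigl\Vert c_i^n\rho_k^n\bigr\Vert_{L^1(\Omega_T)},
\end{equation*}
is useless because $\rho_k^n$ is only bounded in $L^1(\Omega_T)$ and its product with $c_i^n$ is not controlled --- this is exactly where the quadratic structure bites, and why the linear fragmentation gain term does not exhibit the same problem. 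The paper resolves this by first proving $\rho_1^n\to\rho_1$ in $L^1(\Omega_T)$ (estimate \eqref{eq:rho_1_convergence}, which uses the $L^1$ bound on $\rho_k^n$), upgrading to convergence in $L^2(\Omega_T)$ by interpolation with the $L^p$ bound ($p>2$), and then pairing the resulting $L^2$-smallness of $\sum_{j>j_0}jc_j^n$ with the $L^2$ bound on $c_i^n$. Alternatively, your own tail estimate $\sum_{j>j_0}jc_j^n\le j_0^{-(k-1)}\rho_k^n$, small in $L^1$ uniformly in $n$, yields via Chebyshev the convergence in measure of the infinite sum, after which Vitali applies; but one of these two arguments must actually be supplied for the proof to close.
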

\begin{remark}
\begin{itemize}
\item As mentioned in the introduction, for the case (i) a more general result was already proven in~\cite{LauMis02}. However, our proof is much simpler because we assume that we have an a priori estimate on the mass in $L^p$ for some $p>2$, from which we can deduce the convergence (up to extraction) of $\left(c_i^n\right)_n$ in $L^2(\Omega_T)$. We show situations where we can get this a priori estimate in the next sections. A proof similar to the one given here was also presented in~\cite{CanDevFel10}, but it was only valid in dimension $N=1$ because they only had an a priori estimate in $L^2$ for the mass.
\item While assumption~\eqref{hyp:exist_LM} covers a wide variety of coagulation and fragmentation coefficients, it does not hold in some of the cases of strong fragmentation (which allows for stronger coagulation) considered in Theorem~\ref{th:strong_frag}, for instance if $\alpha=1$ or $\beta=1$. In that case the existence will be provided by the case (ii) of Proposition~\ref{prop:existence}. 
\end{itemize}
\end{remark}
\begin{proof}
Since $p\geq 2$, we start by noticing that, for all $i\in\N^*$ $\left(Q_i^n(c^n)\right)_n$ and $\left(F_i^n(c^n)\right)_n$ are bounded, in $L^{1}(\Omega_T)$ and $L^2(\Omega_T)$ respectively. Indeed, remembering the notations introduced in~\eqref{hyp:exist_frag_forte},
\begin{equation*}
\left\vert Q_i^n(c^n) \right\vert \leq  \frac{1}{2}\sum_{j=1}^{i-1}a_{i-j,j}\left(\left(c_{i-j}^n\right)^2+\left(c_j^n\right)^2\right) + c_i^n\sum_{j=1}^{n-i}\frac{a_{i,j}}{j}jc_j^n \leq \left(\rho_1^n\right)^2\left(\sum_{j=1}^{i-1}a_{i-j,j}+K_i^Q\right)
\end{equation*}
and
\begin{equation*}
\left\vert F_i^n(c^n) \right\vert \leq \sum_{j=1}^{n-i}\frac{B_{i+j}\beta_{i+j,i}}{i+j}(i+j)c_{i+j}^n + B_ic_i^n \leq \rho_1^n \left(K_i^F+B_i\right).
\end{equation*}
Therefore $\left(\partial_t c_i^n - d_i\Delta_x c_i^n\right)_n$ is bounded in $L^1(\Omega_T)$, which yields that $\left(c_i^n\right)_n$ lies in a (strongly) compact subset of $L^1(\Omega_T)$. Thus, up to a diagonal extraction, we can assume that for each $i$ in $\N^*$,
\begin{equation*}
c_i^n \longrightarrow c_i,\quad \text{in }L^1(\Omega_T) \text{ and almost surely,} \quad \forall i\in\N^*.
\end{equation*}
Then, since $\left(c_i^n\right)_n$ is bounded in $L^p(\Omega_T)$, (by $\left(\rho_1^n\right)_n$), Fatou's Lemma yields that $c_i$ does in fact lie in $L^p(\Omega_T)$ and since $p>2$, we get by interpolation that 
\begin{equation*}
c_i^n \longrightarrow c_i,\quad \text{in }L^2(\Omega_T), \quad \forall i\in\N^*.
\end{equation*}
Still by Fatou's Lemma, we also have that $\rho_1$ lies in $L^p(\Omega_T)$.

To prove that the limit $c=(c_i)$ is a weak solution of \eqref{eq:syst_coag-frag}-\eqref{hyp:modele}, it suffices to show that $\left(Q_i^n(c^n)\right)_n$ and $\left(F_i^n(c^n)\right)_n$ converge in $L^1(\Omega_T)$ to $Q_i(c)$ and $F_i(c)$ respectively. The convergence of $\left(Q_i^{+,n}(c^n)\right)_n$ and $\left(F_i^{-,n}(c^n)\right)_n$ to $Q_i^{+}$ and $F_i^{-}$ respectively is obvious, because these terms are only composed of linear or quadratic finite sums (and we have the convergence of $c_i^n$ in $L^2$ so the quadratic terms do converge in $L^1$). For the remaining terms, we need a different argument for the cases (i) and (ii).
If (i) holds, we estimate
\begin{align*}
\left\vert Q_i^{-,n}(c^n) - Q_i^-(c)\right\vert &\leq \sum_{j=1}^{j_0}a_{i,j}\left\vert c_{i}^nc_j^{n} -c_{i}c_j \right\vert + \sup\limits_{j>j_0}\frac{a_{i,j}}{j}\left(c_i\sum_{j=j_0+1}^{\infty} jc_j + c_i^n\sum_{j=j_0+1}^{n} jc_j^n\right)\\
&\leq \sum_{j=1}^{j_0}a_{i,j}\left\vert c_{i}^nc_j^{n} -c_{i}c_j \right\vert + \sup\limits_{j>j_0}\frac{a_{i,j}}{j}\left(\rho_1^2+(\rho_1^n)^2\right),
\end{align*}
and 
\begin{align*}
\left\vert F_i^{+,n}(c^n) - F_i^+(c)\right\vert &\leq \sum_{j=1}^{j_0}B_{i+j}\beta_{i+j,i}\left\vert c_{i+j}^n -c_{i+j} \right\vert \\
&\quad + \sup\limits_{j>j_0} \frac{B_{i+j}\beta_{i+j,i}}{i+j}\left(\sum_{j=j_0+1}^{\infty} (i+j)c_{i+j} + \sum_{j=j_0+1}^{n} (i+j)c_{i+j}^n\right)\\
&\leq \sum_{j=1}^{j_0}B_{i+j}\beta_{i+j,i}\left\vert c_{i+j}^n -c_{i+j} \right\vert + \sup\limits_{j>j_0} \frac{B_{i+j}\beta_{i+j,i}}{i+j}\left(\rho_1+\rho_1^n\right).
\end{align*}
Combining the convergence of $\left(c_i^n\right)_n$ to $c_i$ in $L^2$ with~\eqref{hyp:exist_LM} and the fact that $\left(\rho_1^n\right)_n$ is bounded in $L^p(\Omega_T)$, $p\geq 2$, we get that $\left(Q_i^{-,n}(c^n)\right)_n$ and $\left(F_i^{+,n}(c^n)\right)_n$ converge in $L^1(\Omega_T)$, to $Q_i^{-}(c)$ and $F_i^{+}(c)$ respectively. This concludes the proof in the case (i).

If (ii) holds instead of (i), we can only get 
\begin{align*}
\left\vert Q_i^{-,n}(c^n) - Q_i^-(c)\right\vert &\leq \sum_{j=1}^{j_0}a_{i,j}\left\vert c_{i}^nc_j^{n} -c_{i}c_j \right\vert + K^Q_i\left(c_i\sum_{j=j_0+1}^{\infty} jc_j + c_i^n\sum_{j=j_0+1}^{n} jc_j^n\right),
\end{align*}
and 
\begin{align*}
\left\vert F_i^{+,n}(c^n) - F_i^+(c)\right\vert &\leq \sum_{j=1}^{j_0}B_{i+j}\beta_{i+j,i}\left\vert c_{i+j}^n -c_{i+j} \right\vert \\
&\quad + K^F_i\left(\sum_{j=j_0+1}^{\infty} (i+j)c_{i+j} + \sum_{j=j_0+1}^{n} (i+j)c_{i+j}^n\right).
\end{align*}
However, the knowledge that a superlinear moment (i.e. $\rho_k^n$ for $k>1$) is bounded in $L^1(\Omega_T)$, will allow us to control (uniformly in $n$) reminders terms of the form $\sum_{j=j_0+1}^{n} jc_j^n$. Indeed, we already know that $\sum_{j=j_0+1}^{\infty} jc_j$ can be made arbitrarily small in $L^2(\Omega_T)$ by taking $j_0$ large enough, since $\rho_1$ lies in $L^p(\Omega_T)$ with $p\geq 2$. To show that $\sum_{j=j_0+1}^{n} jc_j^n$ can also be made arbitrarily small in $L^2(\Omega_T)$ (uniformly in $n$), it is sufficient to prove that $\left(\rho_1^n\right)_n$ converges to $\rho_1$ in $L^2(\Omega_T)$. To do so, we estimate
\begin{align}
\label{eq:rho_1_convergence}
\int\limits_{\Omega_T}\left\vert \rho_1-\rho_1^n\right\vert  &\leq \sum_{i=1}^{i_0-1}i\int\limits_{\Omega_T}\left\vert c_i-c_i^n\right\vert + \int\limits_{\Omega_T}\sum_{i=i_0}^{\infty}ic_i + \int\limits_{\Omega_T}\sum_{i=i_0}^{n}ic_i^n \nonumber\\
&\leq \sum_{i=1}^{i_0-1}i\int\limits_{\Omega_T}\left\vert c_i-c_i^n\right\vert + \frac{1}{i_0^{k-1}}\left(\int\limits_{\Omega_T}\sum_{i=i_0}^{\infty}i^kc_i + \int\limits_{\Omega_T}\sum_{i=i_0}^{n}i^kc_i^n\right) \nonumber\\
&\leq \sum_{i=1}^{i_0-1}i\int\limits_{\Omega_T}\left\vert c_i-c_i^n\right\vert + \frac{1}{i_0^{k-1}}\left(\left\Vert \rho_k\right\Vert_{L^1(\Omega_T)} + \left\Vert \rho_k^n\right\Vert_{L^1(\Omega_T)} \right),
\end{align}
where we know that $\rho_k$ lies in $L^1(\Omega_T)$, again by Fatou's Lemma. Therefore, $\left(\rho_1^n\right)_n$ converges to $\rho_1$ in $L^1(\Omega_T)$, but since $\left(\rho_1^n\right)_n$ is bounded in $L^p(\Omega_T)$, $p>2$, we get by interpolation that $\left(\rho_1^n\right)_n$ also converges to $\rho_1$ in $L^2(\Omega_T)$. Thus, we finally get that
\begin{equation*}
\left(c_i\sum_{j=j_0+1}^{\infty} jc_j + c_i^n\sum_{j=j_0+1}^{n} jc_j^n\right)\quad \text{and}\quad\left(\sum_{j=j_0+1}^{\infty} (i+j)c_{i+j} + \sum_{j=j_0+1}^{n} (i+j)c_{i+j}^n\right)
\end{equation*}
can be made arbitrarily small in $L^1(\Omega_T)$ (uniformly in $n$) by taking $j_0$ large enough, which shows that $\left(Q_i^{-,n}(c^n)\right)_n$ and $\left(F_i^{+,n}(c^n)\right)_n$ converge in $L^1(\Omega_T)$, to $Q_i^{-}(c)$ and $F_i^{+}(c)$ respectively. This concludes the proof in the case (ii).
\end{proof}
\begin{remark}
We point out that for case (ii), the fragmentation part of hypothesis~\eqref{hyp:exist_frag_forte} is not optimal. Indeed when proving the convergence of $F_i^n$ to $F_i$, we only used information on the first moment (more precisely that $\rho_1$ lies in $L^1(\Omega_T)$ and that $\rho_1^n$ converges to $\rho_1$ in $L^1(\Omega_T)$). Knowing that $\left(\rho_k^n\right)_n$ is bounded in $L^1(\Omega_T)$, for some $k>1$, we could show the convergence of $F_i^n$ to $F_i$ in $L^1(\Omega_T)$ with a weaker hypothesis than~\eqref{hyp:exist_frag_forte}. Indeed estimating
\begin{align*}
\left\vert F_i^{+,n}(c^n) - F_i^+(c)\right\vert \leq &\ \sum_{j=1}^{j_0}B_{i+j}\beta_{i+j,i}\left\vert c_{i+j}^n -c_{i+j} \right\vert \\
&\ + \sup\limits_{j> j_0} \frac{B_{i+j}\beta_{i+j,i}}{(i+j)^k}\left(\sum_{j=j_0+1}^{\infty} (i+j)^kc_{i+j} + \sum_{j=j_0+1}^{n} (i+j)^kc_{i+j}^n\right),
\end{align*}
we see that is suffices to assume that
\begin{equation*}
\lim\limits_{j\to\infty} \frac{B_{i+j}\beta_{i+j,i}}{(i+j)^k}=0, \quad \forall~i\in\N^*.
\end{equation*}
Here it seems mandatory to assume that the limit is 0 (and not simply that the supremum  over $j$ is finite) because we do not know that $\left(\rho_k^n\right)_n$ converges to $\rho_k$ but only that is is bounded in $L^1(\Omega_T)$. Also, we cannot so easily extend similarly the hypothesis on the coagulation coefficients because of the quadratic nature of $Q_i$.
\end{remark}

\section{Duality estimates and propagation of the mass in $L^p$ norms}
\label{sec:rappels}

In this section, we present some duality lemmas and their applications to the proof of propagation of mass in $L^p$ norm for the coagulation-fragmentation system~\eqref{eq:syst_coag-frag}-\eqref{hyp:modele}. First we need to introduce the

\begin{definition}
\label{def:Cmp}
For $m>0$ and $q\in]1,+\infty[$, we define $\constC_{m,q} > 0$ as the best (i.e. the smallest) constant independent of $T>0$ in the parabolic maximal regularity estimate 
\begin{equation*}
\left(\,\int_{\Omega_T} \left\vert \partial_t v \right\vert^q +m^q \int_{\Omega_T} \left\vert \Delta_x v \right\vert^q\right)^{\frac{1}{q}} \leq \constC_{m,q} \left(\,\int_{\Omega_T} \left\vert f \right\vert^q\right)^{\frac{1}{q}},
\end{equation*}
where $v$ is the unique solution of the heat equation with constant diffusion coefficient $m$, homogeneous Neumann boundary conditions, and zero initial data:
\begin{equation*}
\left\{
\begin{aligned}
& \partial_t v - m\Delta_x v = f \quad &\text{on}& \ \Omega_T, \\
& \nabla_x v\cdot \nu = 0 \quad &\text{on}& \ [0,T]\times\partial\Omega, \\
&  v(0,\cdot) = 0 \quad &\text{on}& \ \Omega.
\end{aligned}
\right.
\end{equation*}
\end{definition}
\noindent We recall that for all $m>0$ and $q\in]1,+\infty[$, $\constC_{m,q}$ is finite, and for the particular case $q=2$ we have an explicit bound $\constC_{m,2}\leq 1$ (see \cite{BreDevFel16} and the references therein). We now recall some duality results, the first one being Proposition~1.1 of \cite{CanDevFel14} (see also Proposition~2.4 of \cite{BreDevFel16} for this exact formulation) and the second one being Proposition~2.5 of \cite{BreDevFel16}. In the sequel, we shall systematically denote by $p'$ the conjugate exponent of $p\in]1,+\infty[$, e.g. satisfying $\frac{1}{p}+\frac{1}{p'}=1$.
\begin{lemma}\label{prop:duality}
Let $\Omega$ be a smooth bounded subset of $\R^N$ and consider a function $M:\Omega_T\to\R_+$ satisfying $a\leq M \leq b$ for some $a,b>0$. For any $p\in]1,+\infty[$, if 
\begin{equation*}
\frac{b-a}{b+a}\constC_{\frac{a+b}{2},p'}<1,
\end{equation*}
then there exists a constant $C$ (depending only on $\Omega,T,a,b,p$) such that for any $u_0\in L^p(\Omega)$, any weak solution $u$ of the parabolic system
\begin{equation*}
\left\{
\begin{aligned}
& \partial_t u - \Delta_x \left(Mu\right) = 0 \quad &\text{on}& \ \Omega_T, \\
& \nabla_x u\cdot \nu = 0 \quad &\text{on}& \ [0,T]\times\partial\Omega, \\
&  u(0,\cdot) = u_0 \quad &\text{on}& \ \Omega,
\end{aligned}
\right.
\end{equation*}
satisfies  $\left\Vert u\right\Vert_{L^p(\Omega_T)} \leq C \left\Vert u_0\right\Vert_{L^p(\Omega)}$.
\end{lemma}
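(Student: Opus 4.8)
The plan is a duality argument against the backward adjoint heat problem, using the maximal regularity constant $\constC_{m,p'}$ with $m:=\frac{a+b}{2}$ to absorb the oscillation of $M$ about its midpoint. First I would fix a smooth function $\theta$ on $\Omega_T$ and let $v$ be the solution of the backward problem $-\partial_t v-M\Delta_x v=\theta$ on $\Omega_T$, $\nabla_x v\cdot\nu=0$ on $[0,T]\times\partial\Omega$, $v(T,\cdot)=0$. Multiplying $\partial_t u-\Delta_x(Mu)=0$ by $v$, integrating over $\Omega_T$, integrating by parts twice in space (the Neumann conditions on $u$, $Mu$ and $v$ make every boundary term vanish) and once in time (using $v(T,\cdot)=0$), and then substituting $\partial_t v=-M\Delta_x v-\theta$, the two occurrences of $\int_{\Omega_T}Mu\,\Delta_x v$ cancel and one is left with the identity $\int_{\Omega_T}u\,\theta=\int_\Omega u_0\,v(0,\cdot)$, hence $\int_{\Omega_T}u\,\theta\le\|u_0\|_{L^p(\Omega)}\,\|v(0,\cdot)\|_{L^{p'}(\Omega)}$.

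The core of the argument is then to bound $\|v(0,\cdot)\|_{L^{p'}(\Omega)}$ by $\|\theta\|_{L^{p'}(\Omega_T)}$. Rewriting the equation for $v$ as $-\partial_t v-m\Delta_x v=\theta+(M-m)\Delta_x v$ and performing the time reversal $t\mapsto T-t$ turns it into the constant-coefficient heat equation with diffusion $m$, zero initial datum, and source $\theta+(M-m)\Delta_x v$; since $L^{p'}(\Omega_T)$ norms are unchanged by this reversal, Definition~\ref{def:Cmp} yields $m\|\Delta_x v\|_{L^{p'}(\Omega_T)}\le\constC_{m,p'}(\|\theta\|_{L^{p'}(\Omega_T)}+\|(M-m)\Delta_x v\|_{L^{p'}(\Omega_T)})$ and the analogous bound with $\|\partial_t v\|_{L^{p'}(\Omega_T)}$ on the left. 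Because $a\le M\le b$ forces $|M-m|\le\frac{b-a}{2}$ pointwise, this gives $(m-\constC_{m,p'}\frac{b-a}{2})\|\Delta_x v\|_{L^{p'}(\Omega_T)}\le\constC_{m,p'}\|\theta\|_{L^{p'}(\Omega_T)}$, and the hypothesis $\frac{b-a}{b+a}\constC_{m,p'}<1$ says exactly that $m-\constC_{m,p'}\frac{b-a}{2}>0$, so we obtain $\|\Delta_x v\|_{L^{p'}(\Omega_T)}\le C\|\theta\|_{L^{p'}(\Omega_T)}$ and then $\|\partial_t v\|_{L^{p'}(\Omega_T)}\le C\|\theta\|_{L^{p'}(\Omega_T)}$ with $C=C(a,b,p)$. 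Finally, $v(T,\cdot)=0$ gives $v(0,\cdot)=-\int_0^T\partial_t v(t,\cdot)\,dt$, so Hölder in time yields $\|v(0,\cdot)\|_{L^{p'}(\Omega)}\le T^{1/p}\|\partial_t v\|_{L^{p'}(\Omega_T)}\le C\,T^{1/p}\|\theta\|_{L^{p'}(\Omega_T)}$; combining with the first step, $\int_{\Omega_T}u\,\theta\le C(\Omega,T,a,b,p)\|u_0\|_{L^p(\Omega)}\|\theta\|_{L^{p'}(\Omega_T)}$ for every smooth $\theta$, and taking the supremum over $\theta$ with $\|\theta\|_{L^{p'}(\Omega_T)}\le 1$ gives $\|u\|_{L^p(\Omega_T)}\le C\|u_0\|_{L^p(\Omega)}$. (As a sanity check, for $p=2$ one has $\constC_{m,2}\le 1$, so the smallness condition is automatic and one recovers the known $L^2$ bound.)

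The main obstacle is not the computation above but its rigorous justification for a merely bounded measurable $M$ and a merely weak solution $u$: the absorption step tacitly requires $\Delta_x v\in L^{p'}(\Omega_T)$, which is not clear without regularity of $M$, and the integrations by parts need $u$ and $v$ to be paired legitimately. I would handle this by first establishing the estimate when $M$ is smooth (still with $a\le M\le b$), in which case the backward problem has a strong solution, every norm appearing above is finite, and all manipulations are licit, with constants depending only on $a,b,p,\Omega,T$; then a general $M$ is approximated by such smooth $M_n$ (in the intended application one may instead simply work with the smooth solutions of the truncated system), and the estimate is stable under passing to the limit. The a priori integrability of $u$ needed to make $\int_{\Omega_T}u\,\theta$ meaningful and to run the supremum argument can be secured by testing against truncations of $\theta$ and invoking Fatou's lemma, rather than assuming $u\in L^p(\Omega_T)$ from the outset.
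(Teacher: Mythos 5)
Your argument is correct and is essentially the standard proof of this result: the paper does not prove Lemma~\ref{prop:duality} itself but recalls it from Proposition~1.1 of \cite{CanDevFel14}, and the proof there is exactly your duality scheme — test against the backward dual problem, recenter the diffusion at $m=\frac{a+b}{2}$, absorb $(M-m)\Delta_x v$ via the maximal regularity constant $\constC_{m,p'}$ under the closeness hypothesis, and conclude by duality. Your closing remarks on regularizing $M$ and justifying the absorption step match how the estimate is actually deployed in the paper (on the smooth truncated systems, with constants independent of the approximation).
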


\begin{lemma}\label{prop:duality_rhs}
Let $\Omega$ be a smooth bounded subset of $\R^N$, $\mu_1,\mu_2\geq 0$ and consider a function $M:\Omega_T\to\R_+$ satisfying $a\leq M \leq b$ for some $a,b>0$. For any $p\in]1,+\infty[$, if 
\begin{equation*}
\frac{b-a}{b+a}\constC_{\frac{a+b}{2},p'}<1,
\end{equation*}
then there exists a constant $C$ (depending only on $\Omega,T,a,b,p$) such that for any $u_0\in L^p(\Omega)$, any function $u:\Omega_T\to \R_+$ satisfying (weakly)
\begin{equation}
\label{eq:duality_2}
\left\{
\begin{aligned}
& \partial_t u - \Delta_x\left(Mu\right) \leq \mu_1 u + \mu_2 \quad &\text{on}& \ \Omega_T,\\
& \nabla_x u\cdot \nu = 0 \quad &\text{on}& \ [0,T]\times\partial\Omega, \\
& u(0,\cdot) = u_0 \quad &\text{on}& \ \Omega,
\end{aligned}
\right.
\end{equation}
belongs  to $L^p(\Omega_T)$, and satisfies the estimate:
\begin{equation*}
\left\Vert u\right\Vert_{L^p(\Omega_T)} < C \left(1+\left\Vert u_0\right\Vert_{L^p(\Omega)}\right).
\end{equation*}
\end{lemma}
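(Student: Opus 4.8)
The plan is to deduce Lemma~\ref{prop:duality_rhs} from Lemma~\ref{prop:duality} by absorbing the lower-order right-hand side $\mu_1 u + \mu_2$ into the principal structure. First I would handle the $\mu_2$ term by an inhomogeneous comparison: let $\bar v$ solve the heat equation with constant diffusion coefficient (say the geometric or arithmetic mean in $[a,b]$) with right-hand side $\mu_2$ and zero initial data, so that $\bar v \in L^p(\Omega_T)$ with norm controlled by $\mu_2$, $\Omega$, $T$, and the diffusion constant (this uses only standard $L^p$ parabolic theory, or even just the boundedness of the semigroup acting on constants). Then $w := u - \bar v$ satisfies $\partial_t w - \Delta_x(Mu) \le \mu_1 u + (\mu_2 - \partial_t \bar v)$; it is cleaner, though, to rewrite $\Delta_x(Mu) = \Delta_x(Mw) + \Delta_x(M\bar v)$ and treat $\Delta_x(M\bar v)$ as an additional forcing. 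Alternatively — and this is probably the smoother route — one handles $\mu_2$ and $\mu_1$ together via a Duhamel/Gronwall-type argument as described below, since $\mu_2$ is just the special case of a bounded extra source.

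The core of the argument is the $\mu_1 u$ term. The idea is to time-localize: on a short interval $[0,\tau]$ the factor $e^{\mu_1 \tau}$ is close to $1$, and the contraction condition $\frac{b-a}{b+a}\constC_{\frac{a+b}{2},p'} < 1$ is an open condition, so it survives a small perturbation. Concretely, I would substitute $u(t) = e^{\mu_1 t} z(t)$, which converts the differential inequality into $\partial_t z - \Delta_x(Mz) \le \mu_2 e^{-\mu_1 t} \le \mu_2$, i.e. the $\mu_1 u$ term disappears at the cost of nothing (the sign works out since $u \ge 0$). Wait — one must be careful: $\partial_t u = e^{\mu_1 t}(\mu_1 z + \partial_t z)$, so $\partial_t u - \Delta_x(Mu) = e^{\mu_1 t}(\mu_1 z + \partial_t z - \Delta_x(Mz)) \le \mu_1 e^{\mu_1 t} z + \mu_2$, which gives $\partial_t z - \Delta_x(Mz) \le \mu_2 e^{-\mu_1 t} \le \mu_2$ exactly. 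So the substitution reduces everything to the case $\mu_1 = 0$, and $\|u\|_{L^p(\Omega_T)} \le e^{\mu_1 T}\|z\|_{L^p(\Omega_T)}$, with the new constant still depending only on the allowed data.

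It then remains to treat $\partial_t z - \Delta_x(Mz) \le \mu_2$ with $z(0,\cdot) = u_0$. Here I would write $z = z_1 + z_2$ where $z_1$ solves the \emph{equation} $\partial_t z_1 - \Delta_x(Mz_1) = 0$ with $z_1(0) = u_0$ (bounded in $L^p$ by Lemma~\ref{prop:duality}), and $z_2 := z - z_1 \ge 0$ satisfies $\partial_t z_2 - \Delta_x(Mz_2) \le \mu_2$ with $z_2(0) = 0$. To bound $z_2$ one runs the duality argument directly: test against the solution $\phi \ge 0$ of the backward dual problem $-\partial_t \phi - M\Delta_x \phi = \theta$ for $\theta \in L^{p'}(\Omega_T)$, $\theta \ge 0$, $\phi(T) = 0$; integrating the inequality for $z_2$ against $\phi$ gives $\int_{\Omega_T} z_2\,\theta \le \mu_2 \int_{\Omega_T}\phi$, and the maximal-regularity estimate behind $\constC$ (exactly as used in the proof of Lemma~\ref{prop:duality}, i.e. writing $M = \frac{a+b}{2} + (M - \frac{a+b}{2})$ and absorbing) controls $\|\phi\|_{L^{p'}(\Omega_T)}$ by $C\|\theta\|_{L^{p'}(\Omega_T)}$; taking the supremum over such $\theta$ yields $\|z_2\|_{L^p(\Omega_T)} \le C\mu_2$. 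Since this is essentially a re-run of the proof of Lemma~\ref{prop:duality} with a source term carried along, one could also just cite that proof. The main obstacle is bookkeeping rather than conceptual: one must check that the contraction constant is genuinely the same (it is, since it depends only on $a,b,p,\Omega$ and not on the source or the initial data), that positivity of $u$ is used exactly where the sign of $\mu_1 u$ matters, and that the dual test function can be taken nonnegative so that the differential \emph{inequality} (not equality) is preserved under testing — this last point is the one place where sign conditions must be tracked with care, and it relies on the maximum principle for the dual parabolic equation.
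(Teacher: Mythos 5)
The paper itself gives no proof of this lemma: it is recalled verbatim from \cite{BreDevFel16} (Proposition~2.5 there), so there is no in-paper argument to compare against. Your reconstruction follows the standard duality strategy and is essentially sound: the substitution $u=e^{\mu_1 t}z$ does eliminate the $\mu_1 u$ term exactly (the cancellation is an identity, so the sign of $u$ is not even needed there), it preserves $z\ge 0$ and $z(0,\cdot)=u_0$, and it costs only a factor $e^{\mu_1 T}$; the remaining inequality $\partial_t z-\Delta_x(Mz)\le\mu_2$ is then handled by pairing with the nonnegative solution $\phi$ of the backward dual problem $-\partial_t\phi-M\Delta_x\phi=\theta$, $\phi(T)=0$, whose norm is controlled via the maximal regularity constant $\constC_{\frac{a+b}{2},p'}$ under the closeness condition --- exactly the mechanism behind Lemma~\ref{prop:duality}.

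Two points in your write-up need repair, though neither is fatal. First, the claim $z_2=z-z_1\ge 0$ is unjustified: $z$ satisfies only a one-sided differential inequality, so no comparison with $z_1$ is available, and the duality pairing $\sup\left\{\int_{\Omega_T}z_2\theta:\ \theta\ge 0,\ \Vert\theta\Vert_{L^{p'}}\le 1\right\}$ controls only $\Vert z_2^+\Vert_{L^p(\Omega_T)}$, not $\Vert z_2\Vert_{L^p(\Omega_T)}$. This is nonetheless sufficient, since $0\le z=z_1+z_2\le z_1+z_2^+$ gives $\Vert z\Vert_{L^p}\le\Vert z_1\Vert_{L^p}+\Vert z_2^+\Vert_{L^p}$. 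Second, the decomposition $z=z_1+z_2$ presupposes the existence of a weak solution $z_1$ of the non-divergence equation $\partial_t z_1=\Delta_x(Mz_1)$ for a merely bounded measurable $M$, which is itself a nontrivial matter in this framework of \emph{a priori} estimates. Both issues disappear if you skip the decomposition and test $z$ itself against $\phi$: the integration by parts then produces the additional term $\int_{\Omega}u_0\,\phi(0,\cdot)$, and writing $\phi(0,x)=\int_0^T(\theta+M\Delta_x\phi)(s,x)\,ds$ together with the same maximal-regularity bound on $\Delta_x\phi$ yields $\Vert\phi(0,\cdot)\Vert_{L^{p'}(\Omega)}\le C\Vert\theta\Vert_{L^{p'}(\Omega_T)}$, hence directly $\Vert z\Vert_{L^p(\Omega_T)}\le C\left(\mu_2+\Vert u_0\Vert_{L^p(\Omega)}\right)$. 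This one-shot pairing is the cleaner route and is the form of the argument in the cited reference.
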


Let us now briefly explain how the above duality lemmas are used in the context of the coagulation-fragmentation system~\eqref{eq:syst_coag-frag}-\eqref{hyp:modele}.
\begin{proposition}\label{th:first_moment}
Let $\Omega$ be a smooth bounded domain of $\R^N$ and $T>0$. Assume~\eqref{hyp:modele}. Let $p\in]1,+\infty[$ and assume also that the initial data $c_i^{in}\geq 0$ are such that the initial mass $\rho_1^{in}$ lies in $L^p(\Omega)$. Assume
\begin{equation}
\label{def_hyp:ab}
a:=\inf\limits_{i\in\N^*} d_i>0 \quad \text{and} \quad b:=\sup\limits_{i\in\N^*} d_i<\infty,
\end{equation}
and
\begin{equation}
\label{hyp:closeness}
\frac{b-a}{b+a}\constC_{\frac{a+b}{2},p'}<1.
\end{equation}
\par 
Then there exists $C>0$ such that, for all $n\in\N^*$, the mass $\rho_1^n$ associated to  the solution of the truncated system \eqref{eq:approx_system}-\eqref{eq:approx_F} satisfies
\begin{equation*}
\left\Vert \rho_1^n\right\Vert_{L^p(\Omega_T)}\leq C \left\Vert \rho_1^ {in}\right\Vert_{L^p(\Omega)}.
\end{equation*}
\end{proposition}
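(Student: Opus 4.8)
The plan is to recognize $\rho_1^n$ as a solution of a scalar linear parabolic equation of the form $\partial_t u - \Delta_x(Mu) = 0$ with a bounded measurable coefficient $M$ taking values in a fixed interval, and then to apply Lemma~\ref{prop:duality} essentially verbatim; the crucial point is that the bounds on $M$, and hence the constant produced by the lemma, do not depend on the truncation parameter $n$.

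First I would derive the equation satisfied by $\rho_1^n$. Taking $\varphi_i = i$ in the truncated weak formulation \eqref{eq:Q_faible_tronq}--\eqref{eq:F_faible_tronq}: the coagulation term vanishes since $\varphi_{i+j} - \varphi_i - \varphi_j = 0$, and the fragmentation term vanishes since the mass-conservation assumption in \eqref{hyp:modele} gives $\varphi_i - \sum_{j=1}^{i-1}\beta_{i,j}\varphi_j = i - \sum_{j=1}^{i-1}j\beta_{i,j} = 0$. Hence, multiplying the $i$-th equation of \eqref{eq:approx_system} by $i$ and summing the (finitely many, smooth) terms, one obtains that $\rho_1^n$ is a smooth, hence weak, solution of $\partial_t \rho_1^n - \Delta_x\big(\sum_{i=1}^n i d_i c_i^n\big) = 0$ on $\Omega_T$, with $\nabla_x \rho_1^n\cdot\nu = 0$ on $[0,T]\times\partial\Omega$ and $\rho_1^n(0,\cdot) = \sum_{i=1}^n i c_i^{in}$.

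Next I would put this in the shape required by Lemma~\ref{prop:duality}. Set $M_1^n := \big(\sum_{i=1}^n i d_i c_i^n\big)/\rho_1^n$ on the open set $\{\rho_1^n>0\}$ and $M_1^n := \tfrac{a+b}{2}$ on its complement; since the $c_i^n$ are nonnegative, $\rho_1^n = 0$ forces $c_i^n = 0$ for every $i$, so the identity $\sum_{i=1}^n i d_i c_i^n = M_1^n\rho_1^n$ holds on all of $\Omega_T$ and the equation becomes $\partial_t \rho_1^n - \Delta_x(M_1^n\rho_1^n) = 0$. The function $M_1^n$ is measurable, and from $a = \inf_i d_i \le d_i \le \sup_i d_i = b$ (assumption \eqref{def_hyp:ab}) together with the nonnegativity of the $c_i^n$ one gets $a \le M_1^n \le b$. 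Moreover $\rho_1^n(0,\cdot) = \sum_{i=1}^n i c_i^{in} \le \rho_1^{in} \in L^p(\Omega)$. Since \eqref{hyp:closeness} is exactly the smallness condition $\frac{b-a}{b+a}\constC_{\frac{a+b}{2},p'}<1$ appearing in Lemma~\ref{prop:duality}, applying that lemma with $u = \rho_1^n$, $M = M_1^n$, $u_0 = \rho_1^n(0,\cdot)$ yields $\|\rho_1^n\|_{L^p(\Omega_T)} \le C\,\|\rho_1^n(0,\cdot)\|_{L^p(\Omega)} \le C\,\|\rho_1^{in}\|_{L^p(\Omega)}$.

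The only delicate point — and the reason the statement carries a single constant $C$ valid for all $n$ — is the uniformity in $n$: the constant furnished by Lemma~\ref{prop:duality} depends only on $\Omega, T, a, b, p$, and in \eqref{def_hyp:ab} the numbers $a,b$ are the infimum and supremum of the whole sequence $(d_i)_{i\in\N^*}$, hence independent of $n$; since every $M_1^n$ takes values in the same interval $[a,b]$, the same $C$ works for all $n$. Beyond this bookkeeping (and the harmless convention fixing $M_1^n$ on the null set $\{\rho_1^n = 0\}$), there is no genuine analytic obstacle: the whole content is packaged in the duality lemma.
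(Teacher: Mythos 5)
Your proof is correct and follows exactly the paper's argument: apply the weak formulation with $\varphi_i=i$ to see that $\rho_1^n$ solves $\partial_t\rho_1^n-\Delta_x(M_1^n\rho_1^n)=0$ with $a\le M_1^n\le b$, then invoke Lemma~\ref{prop:duality} with constants depending only on $\Omega,T,a,b,p$. Your extra care in defining $M_1^n$ on the set $\{\rho_1^n=0\}$ is a harmless refinement the paper leaves implicit.
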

\begin{proof}
Looking at~\eqref{eq:Q_faible_tronq}-\eqref{eq:F_faible_tronq}, we see that
\begin{equation*}
\sum_{i=1}^niQ_i^n = 0 = \sum_{i=1}^niF_i^n,
\end{equation*}
and thus the mass $\rho_1^n$ satisfies
\begin{equation*}
\partial_t \rho_1^n - \Delta_x \left(M_1^n\rho_1^n\right) = 0,
\end{equation*}
where $M_1^n=\frac{\sum_{i=1}^{n} i d_ic_i^n}{\sum_{i=1}^{n} i c_i^n}$. Thanks to assumptions~\eqref{def_hyp:ab}-\eqref{hyp:closeness}, we can apply Lemma~\ref{prop:duality} to $\rho^n_1$, and the results follows since $\Vert \left(\rho_1^n\right)^{in}\Vert_{L^p(\Omega)} \leq \Vert \rho_1^ {in}\Vert_{L^p(\Omega)}$ for all $n\in\N^*$.
\end{proof}

As already pointed out in~\cite{CanDevFel14}, the closeness hypothesis~\eqref{hyp:closeness} can be removed when $p$ is close to 2, providing a variant of Proposition~\ref{th:first_moment}. 
\begin{proposition}\label{cor:proche_2}
Let $\Omega$ be a smooth bounded domain of $\R^N$ and $T>0$. Assume~\eqref{hyp:modele}. Assume that the diffusion coefficients satisfy \eqref{def_hyp:ab} and that we have nonnegative initial data $c_i^{in}\geq 0$.
\par
There exists $p_0>2$ such that, for all $p\in [2,p_0[$,  if the initial mass $\rho_1^{in}$ lies in $L^p(\Omega)$, then there exists $C>0$ such that, for all $n\in\N^*$, the mass $\rho_1^n$ associated to  the solution of the truncated system \eqref{eq:approx_system}-\eqref{eq:approx_F} satisfies
\begin{equation*}
\left\Vert \rho_1^n\right\Vert_{L^p(\Omega_T)}\leq C \left\Vert \rho_1^ {in}\right\Vert_{L^p(\Omega)}.
\end{equation*}
\end{proposition}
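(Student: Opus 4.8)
The plan is to deduce Proposition~\ref{cor:proche_2} from Proposition~\ref{th:first_moment} by showing that the closeness hypothesis~\eqref{hyp:closeness} is automatically satisfied when $p$ is sufficiently close to $2$. First I would observe that, as recalled just after Definition~\ref{def:Cmp}, for $q=2$ we have the explicit bound $\constC_{m,2}\leq 1$, so in particular $\constC_{\frac{a+b}{2},2}\leq 1$. Combined with the trivial inequality $\frac{b-a}{b+a}<1$ (valid since $a,b>0$), this gives
\begin{equation*}
\frac{b-a}{b+a}\constC_{\frac{a+b}{2},2}<1.
\end{equation*}
So the hypothesis~\eqref{hyp:closeness} holds \emph{strictly} at the exponent $p=2$ (where $p'=2$ as well).

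The key step is then a continuity argument in the exponent. I would invoke the fact (again available from \cite{CanDevFel14,BreDevFel16} and the references therein, since $\constC_{m,q}$ is the optimal constant in parabolic maximal regularity on a fixed smooth bounded domain) that the map $q\mapsto\constC_{m,q}$ is continuous — or at least upper semicontinuous, or even just bounded near $q=2$, which is all that is needed — for $q$ in a neighbourhood of $2$. Since $p\mapsto p'$ is continuous and maps $2$ to $2$, the composition $p\mapsto\frac{b-a}{b+a}\constC_{\frac{a+b}{2},p'}$ is then continuous near $p=2$ and takes a value strictly less than $1$ at $p=2$. Hence there exists $p_0>2$ such that for all $p\in[2,p_0[$ we still have $\frac{b-a}{b+a}\constC_{\frac{a+b}{2},p'}<1$, i.e.~\eqref{hyp:closeness} holds. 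Note $p_0$ depends only on $\Omega$ and on $a,b$ (through the ratio $\frac{b-a}{b+a}$ and the diffusion scale $\frac{a+b}{2}$), not on the initial data.

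With~\eqref{hyp:closeness} verified for every such $p$, the conclusion is immediate: fix $p\in[2,p_0[$ and assume $\rho_1^{in}\in L^p(\Omega)$. All hypotheses of Proposition~\ref{th:first_moment} are met — \eqref{hyp:modele} by assumption, \eqref{def_hyp:ab} by the standing assumption on the $d_i$, and \eqref{hyp:closeness} by the argument above — so that proposition yields a constant $C>0$ (depending only on $\Omega,T,a,b,p$) with $\left\Vert\rho_1^n\right\Vert_{L^p(\Omega_T)}\leq C\left\Vert\rho_1^{in}\right\Vert_{L^p(\Omega)}$ for all $n\in\N^*$, which is exactly the claim.

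The only genuine obstacle is the regularity of $q\mapsto\constC_{m,q}$ near $q=2$; one must cite or justify that this optimal constant does not blow up as $q'$ moves slightly away from $2$ (equivalently, that $\constC_{\frac{a+b}{2},p'}$ stays below $1/\tfrac{b-a}{b+a}\cdot$ for $p'$ slightly below $2$). This is exactly the content of the remark attributed to \cite{CanDevFel14}, so in the write-up I would simply reference that source rather than reprove the maximal-regularity interpolation estimate; everything else is a direct specialisation of Proposition~\ref{th:first_moment}.
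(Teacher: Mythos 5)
Your argument is correct and is essentially identical to the paper's own proof: both establish strict inequality at $p=2$ via the explicit bound $\constC_{m,2}\leq 1$ together with $\frac{b-a}{b+a}<1$, then invoke the continuity of $p\mapsto\constC_{\frac{a+b}{2},p'}$ (the paper cites Lemma~3.2 of \cite{CanDevFel14} for an explicit computation) to find $p_0>2$, and conclude by Proposition~\ref{th:first_moment}. No gaps.
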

\begin{proof}
As already pointed out after Definition~\ref{def:Cmp}, for $q=2$ we have an explicit bound $\constC_{m,2}\leq 1$. This yields that for any $a,b>0$,
\begin{equation*}
\frac{b-a}{b+a}\constC_{\frac{a+b}{2},2}\leq \frac{b-a}{b+a}<1.
\end{equation*}
Therefore, by continuity of $p\mapsto \constC_{m,p'}$ (see \cite[Lemma~3.2]{CanDevFel14} for an explicit computation) there exists $p_0>2$ (depending on $a=\inf\limits_{i\in\N^*} d_i$ and $b=\sup\limits_{i\in\N^*} d_i$), such that for all $p\in [2,p_0[$
\begin{equation*}
\frac{b-a}{b+a}\constC_{\frac{a+b}{2},p'}<1.
\end{equation*}
We then conclude by applying again Lemma~\ref{prop:duality} (or directly Proposition~\ref{th:first_moment}).
\end{proof}

Finally, if one assume \eqref{hyp:convergence_di}, e.g. that the diffusion rates $d_i$ are converging toward a positive limit, hypothesis~\eqref{hyp:closeness} can be removed for any $p$, basically because the coefficients $d_i$ will then be arbitrarily close from one another for $i$ large enough (if fragmentation is non zero, we have to restrict ourselves to low dimension $N\leq 2$). This leads to yet another variant of Proposition~\ref{th:first_moment}, which was already stated and proven in \cite{BreDevFel16} in the particular case of no fragmentation, that is when one assumes that $F_i=0$ for all $i\in\N^*$.

\begin{proposition}\label{th:first_moment_convergence}
Let $\Omega$ be a smooth bounded domain of $\R^N$, $N\leq 2$, and $T>0$. Assume~\eqref{hyp:modele}. Assume also that the coagulation coefficients satisfy
\begin{equation}
\label{hyp:Cij}
a_{i,j}\leq Cij,\qquad \forall~i,j\in\N^*,
\end{equation} 
and that the diffusion coefficients satisfy~\eqref{hyp:convergence_di}. Let $p\in]2,+\infty[$ and assume that the initial data $c_i^{in}\geq 0$ lie in $L^{\infty}(\Omega)$ for all $i\in\N^*$, and that the initial mass $\rho_1^{in}$ lies in $L^p(\Omega)$. 
\par
Then there exists $C>0$ such that, for all $n\in\N^*$, the mass $\rho_1^n$ associated to  the solution of the truncated system \eqref{eq:approx_system}-\eqref{eq:approx_F} satisfies
\begin{equation*}
\label{eq:apriori_rho1}
\left\Vert \rho_1^n\right\Vert_{L^p(\Omega_T)}\leq C \left(1+\left\Vert \rho_1^ {in}\right\Vert_{L^p(\Omega)}\right).
\end{equation*}
\end{proposition}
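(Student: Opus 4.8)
The plan is to carry over to the present setting the argument of \cite{BreDevFel16} for the pure-coagulation case: one splits the truncated mass into a "low-index'' and a "high-index'' part, the latter carrying a diffusion coefficient close to $d_\infty$. Fix $\varepsilon>0$ small enough that $\frac{\varepsilon}{d_\infty}\,\constC_{d_\infty,p'}<1$ (possible since $\constC_{d_\infty,p'}<\infty$), and use~\eqref{hyp:convergence_di} to pick $i_0\in\N^*$ with $|d_i-d_\infty|<\varepsilon$ for every $i>i_0$. Write $\rho_1^n=\sigma^n+\tau^n$, with $\sigma^n:=\sum_{i=1}^{i_0}ic_i^n$ and $\tau^n:=\sum_{i>i_0}ic_i^n$, and note that $\tau^n(0,\cdot)=\sum_{i>i_0}ic_i^{in}\le\rho_1^{in}\in L^p(\Omega)$.

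First I would treat the high-index part. Testing the weak formulations~\eqref{eq:Q_faible_tronq}--\eqref{eq:F_faible_tronq} against $\varphi_i=i$ and against $\varphi_i=i\,\mathbf 1_{\{i>i_0\}}$ and using the mass conservation of fragmentation ($\sum_{j<i}j\beta_{i,j}=i$), one finds that $\tau^n$ solves
\begin{equation*}
\partial_t\tau^n-\Delta_x\!\left(\tilde M^n\tau^n\right)=\sum_{i>i_0}i\bigl(Q_i^n(c^n)+F_i^n(c^n)\bigr),\qquad \tilde M^n:=\frac{\sum_{i>i_0}id_ic_i^n}{\tau^n}\in[d_\infty-\varepsilon,\,d_\infty+\varepsilon],
\end{equation*}
where $\sum_{i>i_0}iF_i^n(c^n)=-\sum_{i>i_0}B_ic_i^n\sum_{j\le i_0}j\beta_{i,j}\le0$ (a favourable sign, so this term may simply be dropped), while $a_{i,j}\le Cij$ yields $0\le\sum_{i>i_0}iQ_i^n(c^n)\le C(\sigma^n)^2+C\,\sigma^n\tau^n$ with $C=C(i_0,p)$. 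Granting the uniform bound $\|\sigma^n\|_{L^\infty(\Omega_T)}\le S$ discussed below, the right-hand side is then $\le CS^2+CS\,\tau^n$; since the midpoint of $[d_\infty-\varepsilon,d_\infty+\varepsilon]$ is exactly $d_\infty$, the closeness hypothesis of Lemma~\ref{prop:duality_rhs} holds for $\tilde M^n$, and that lemma gives $\|\tau^n\|_{L^p(\Omega_T)}\le C\bigl(1+\|\tau^n(0)\|_{L^p(\Omega)}\bigr)\le C\bigl(1+\|\rho_1^{in}\|_{L^p(\Omega)}\bigr)$.

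It remains to produce the uniform bound on $\sigma^n=\sum_{i\le i_0}ic_i^n$, and this is where $N\le2$ and $c_i^{in}\in L^\infty(\Omega)$ are used; I expect this to be the main obstacle. In the pure-coagulation case the bound followed from a finite induction on $i$ (as $Q_i^+$ involves only clusters of size $<i$), but the fragmentation gain $F_i^+$ couples the modes $i\le i_0$ to arbitrarily high ones, so one argues globally instead. Lemma~\ref{prop:duality} with $p=2$ — always applicable, since $\constC_{m,2}\le1$ — applied to $\rho_1^n$, which solves $\partial_t\rho_1^n-\Delta_x(M_1^n\rho_1^n)=0$ with $\inf_id_i\le M_1^n\le\sup_id_i$, gives $\|\rho_1^n\|_{L^2(\Omega_T)}\le C\|\rho_1^{in}\|_{L^2(\Omega)}$. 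Moreover $\sigma^n$ satisfies $\partial_t\sigma^n-\Delta_x(\mathcal M^n\sigma^n)\le G^n$, with $\inf_id_i\le\mathcal M^n\le\sup_id_i$, the coagulation contribution $\sum_{i\le i_0}iQ_i^n\le0$ dropped, and $G^n:=\sum_{i\le i_0}iF_i^n(c^n)=\sum_{i>i_0}B_ic_i^n\sum_{j\le i_0}j\beta_{i,j}\ge0$ the fragmentation flux into the low modes; integrating this in $(t,x)$ and using the truncated mass conservation $\int_\Omega\rho_1^n(t,\cdot)=\int_\Omega\rho_1^{in}$ together with the coagulation estimate of the previous paragraph, one obtains $\|G^n\|_{L^1(\Omega_T)}\le C\bigl(1+\|\rho_1^{in}\|_{L^2(\Omega)}^2\bigr)$, uniformly in $n$. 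Promoting this $L^1$-type source (and the $L^\infty$ bound on $\sigma^n(0,\cdot)=\sum_{i\le i_0}ic_i^{in}$) to the required $L^\infty(\Omega_T)$ control of $\sigma^n$ — through the parabolic smoothing of the operators $\partial_t-\Delta_x(M\,\cdot)$ with $M$ bounded, which in dimension $N\le2$ is strong enough, and a bootstrap reinjecting the gained integrability of $\rho_1^n$ into the quadratic sources — is the delicate point.

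Combining the two parts gives $\|\rho_1^n\|_{L^p(\Omega_T)}\le\|\sigma^n\|_{L^p(\Omega_T)}+\|\tau^n\|_{L^p(\Omega_T)}\le |\Omega_T|^{1/p}S+C\bigl(1+\|\rho_1^{in}\|_{L^p(\Omega)}\bigr)\le C\bigl(1+\|\rho_1^{in}\|_{L^p(\Omega)}\bigr)$, which is the desired estimate. As indicated, the genuinely nontrivial step is the uniform control of the low-index part $\sigma^n$: because fragmentation forbids a mode-by-mode bootstrap, one has to combine the favourable sign of the fragmentation flux into the low modes, the a priori $L^2$ bound on $\rho_1^n$, and the parabolic regularity that is available only in low dimension $N\le2$.
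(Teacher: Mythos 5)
Your decomposition of $\rho_1^n$ into a low-index part $\sigma^n$ and a high-index tail $\tau^n$, and your treatment of the tail (nonpositive fragmentation contribution, coagulation contribution bounded via $a_{i,j}\leq Cij$ by terms of the form $\mu_1\tau^n+\mu_2$ once $\sigma^n$ is bounded in $L^\infty$, then Lemma~\ref{prop:duality_rhs} with $a=d_\infty-\varepsilon$, $b=d_\infty+\varepsilon$) is exactly the paper's argument. The gap is in the step you yourself flag as delicate: the uniform $L^\infty(\Omega_T)$ bound on $\sigma^n$. Your route --- bound the aggregated fragmentation influx $G^n=\sum_{i\leq i_0}iF_i^n(c^n)$ in $L^1(\Omega_T)$ by integrating the equation, then invoke parabolic smoothing in dimension $N\leq 2$ --- cannot work as stated: an $L^1(\Omega_T)$ right-hand side never yields $L^\infty$ solutions of a heat-type equation; one needs the source in $L^r(\Omega_T)$ with $r>(N+2)/2$, i.e. $r>2$ when $N=2$, and the $L^1$ (or even the borderline $L^2$) control you extract falls short, so the proposed bootstrap does not close. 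Moreover the smoothing you invoke would have to be applied to $\partial_t-\Delta_x(\mathcal M^n\,\cdot)$ with merely bounded $\mathcal M^n$, for which no such $L^r\to L^\infty$ gain is available.

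The paper closes this step differently (Lemma~\ref{lem:c_i}), and the two ingredients you are missing are the following. First, Proposition~\ref{cor:proche_2} (duality with exponent close to $2$, where no closeness condition is needed because $\constC_{m,2}\leq 1$) gives $\rho_1^n$ bounded in $L^r(\Omega_T)$ for some $r>2$ --- strictly better than the $L^2$ bound you use. Second, the fragmentation gain is dominated \emph{pointwise}, mode by mode, by the first moment: $F_i^{+,n}(c^n)\leq K_i^F\rho_1^n$ with $K_i^F=\sup_j B_{i+j}\beta_{i+j,i}/(i+j)$, which requires the hypothesis~\eqref{hyp:exist_frag_forte} (implicitly assumed here, and satisfied in all applications of the proposition). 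With these, each $c_i^n$ solves $\partial_t c_i^n-d_i\Delta_x c_i^n\leq Q_i^{+,n}(c^n)+K_i^F\rho_1^n$ with a constant diffusion coefficient and an $L^r$, $r>2>(N+2)/2$, source, so the finite induction on $i$ that you declared impossible does in fact go through: the coagulation gain $Q_i^{+,n}$ involves only $c_j^n$ with $j<i$, already bounded in $L^\infty$ by the induction hypothesis, while the fragmentation coupling to arbitrarily high modes is absorbed into $K_i^F\rho_1^n$ rather than treated as a genuine coupling. This is where $N\leq 2$ and $c_i^{in}\in L^\infty(\Omega)$ are used, and it is the step your proposal does not actually establish.
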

We point out that the assumption~\eqref{hyp:convergence_di} stating that $(d_i)$ converges, which allows us to get rid of the closeness hypothesis~\eqref{hyp:closeness}, is not much more stringent from the physical point of view than just assuming that the diffusions coefficients are bounded below, because it is expected that the clusters diffuse less when they become larger, and thus the sequence $(d_i)$ is expected to be decreasing. Also notice that while assumption~\eqref{hyp:closeness} depends on $p$ (and gets more and more stringent when $p$ goes to infinity), assuming~\eqref{hyp:convergence_di} allows to get the propagation of the mass in every $L^p$, $p<+\infty$. To prove Proposition~\ref{th:first_moment_convergence}, we first need a control on a finite number of concentrations $c_i$. This is the content of the following lemma.
\begin{lemma}\label{lem:c_i}
Let $\Omega$ be a smooth bounded domain of $\R^N$, $N\leq 2$, and $T>0$. Assume~\eqref{hyp:modele}. Assume that the fragmentation coefficients satisfy the asymptotic behavior \eqref{hyp:exist_frag_forte} and that the diffusion coefficients satisfy \eqref{def_hyp:ab}. Assume also that the initial concentrations $c^{in}_i\geq 0$ each lie in $L^{\infty}(\Omega)$ and that there exists $p>2$ such that the initial mass $\rho_1^{in}$ lies in $L^p(\Omega)$. 
\par
Then, for all $i\in\N^*$, there exists $\tilde c_i\geq 0$ such that the solution $c^n$ of~\eqref{eq:approx_system}-\eqref{eq:approx_F} satisfies
\begin{equation*}
\left\Vert c^n_i \right\Vert_{L^{\infty}(\Omega_T)} \leq \tilde c_i,\quad \forall~n\in\N^*.
\end{equation*}
\end{lemma}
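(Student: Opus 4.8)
The plan is to bound the concentrations $c_i^n$ inductively on $i$, using the fact that each $c_i^n$ solves a heat equation whose right-hand side can be controlled by lower-index concentrations plus terms that are harmless thanks to the control on $\rho_1^n$. First I would treat $i=1$: since clusters of size $1$ do not fragment ($B_1=0$ and there is no fragmentation gain into size $1$ from... wait — actually $F_1^{+,n}$ picks up $\sum_j B_{1+j}\beta_{1+j,1}c_{1+j}^n$), the cleaner observation is that $c_1^n$ satisfies $\partial_t c_1^n - d_1\Delta_x c_1^n = Q_1^n + F_1^n \leq F_1^{+,n}(c^n) \leq K_1^F \rho_1^n$, using $Q_1^n = -\sum_j a_{1,j}c_1^nc_j^n \leq 0$, $F_1^{-,n}=0$, and the bound $F_1^{+,n}(c^n)=\sum_{j=1}^{n-1}B_{1+j}\beta_{1+j,1}c_{1+j}^n \leq K_1^F\sum_j (1+j)c_{1+j}^n \leq K_1^F\rho_1^n$ from \eqref{hyp:exist_frag_forte}. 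Since $\rho_1^n$ is \emph{not} yet known to be bounded in $L^p$ here — Proposition \ref{th:first_moment_convergence}, whose proof this lemma feeds into, is not available — I must instead use only the crude \emph{a priori} estimate that $\rho_1^n$ is bounded in $L^p(\Omega_T)$ for the given $p>2$, which is exactly the standing hypothesis of the lemma. So $c_1^n$ solves a heat equation with $L^p(\Omega_T)$ right-hand side (uniformly in $n$) and $L^\infty$ initial data; parabolic regularity together with a Sobolev embedding (here is where $N\leq 2$ enters: for $p>2 \geq N$, $W^{2,1}_p(\Omega_T)\hookrightarrow L^\infty(\Omega_T)$, or more precisely into $C(\overline{\Omega_T})$) gives a uniform bound $\|c_1^n\|_{L^\infty(\Omega_T)}\leq \tilde c_1$.

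Next I would set up the induction. Suppose $\|c_j^n\|_{L^\infty(\Omega_T)}\leq \tilde c_j$ for all $j<i$, uniformly in $n$. Looking at the $i$-th equation, $\partial_t c_i^n - d_i\Delta_x c_i^n = Q_i^n(c^n) + F_i^n(c^n) \leq Q_i^{+,n}(c^n) + F_i^{+,n}(c^n)$, since the loss terms $Q_i^{-,n},F_i^{-,n}\geq 0$. The coagulation gain is a finite sum $Q_i^{+,n}(c^n)=\tfrac12\sum_{j=1}^{i-1}a_{i-j,j}c_{i-j}^n c_j^n$ involving only concentrations of size $<i$, hence is bounded in $L^\infty(\Omega_T)$ by $\tfrac12\sum_{j=1}^{i-1}a_{i-j,j}\tilde c_{i-j}\tilde c_j$ by the induction hypothesis. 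The fragmentation gain is again estimated as in the proof of Proposition \ref{prop:existence}: $F_i^{+,n}(c^n)=\sum_{j=1}^{n-i}B_{i+j}\beta_{i+j,i}c_{i+j}^n \leq K_i^F\sum_{j}(i+j)c_{i+j}^n \leq K_i^F\rho_1^n$, which is bounded in $L^p(\Omega_T)$ uniformly in $n$ by hypothesis. So the right-hand side of the $i$-th heat equation is bounded in $L^p(\Omega_T)$ uniformly in $n$ (the $L^\infty$ coagulation part is in particular in $L^p$), and the initial datum $c_i^{in}$ is in $L^\infty(\Omega)$; the same parabolic-regularity-plus-Sobolev-embedding argument as in the base case produces the uniform bound $\|c_i^n\|_{L^\infty(\Omega_T)}\leq \tilde c_i$. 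This closes the induction.

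The one point needing a little care — and the main (mild) obstacle — is the passage from an $L^p(\Omega_T)$ bound on the heat right-hand side, with $p>2$, to an $L^\infty(\Omega_T)$ bound on the solution, uniformly in the diffusion coefficient $d_i$. For this I would invoke maximal parabolic regularity for $\partial_t - d_i\Delta_x$ with Neumann conditions on the bounded smooth domain $\Omega$: the solution lies in $W^{2,1}_p(\Omega_T)$ (second space derivatives and first time derivative in $L^p$) with a constant depending on $d_i$ but, since here $i$ (hence $d_i$) is \emph{fixed} in each step of the induction, that dependence is harmless — we only need uniformity in $n$, not in $i$. Then, because $N\leq 2<p$, the anisotropic Sobolev embedding $W^{2,1}_p(\Omega_T)\hookrightarrow C(\overline{\Omega_T})$ holds, which is precisely why the dimension restriction $N\leq 2$ appears in the statement. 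One must also account for the contribution of the nonzero initial datum $c_i^{in}\in L^\infty(\Omega)$: splitting $c_i^n = w_i^n + z_i^n$ where $w_i^n$ carries the initial datum with zero right-hand side (bounded in $L^\infty$ by the maximum principle, since $c_i^{in}\in L^\infty$) and $z_i^n$ carries the right-hand side with zero initial datum (handled by the estimate above) settles this. Everything else is a routine combination of the nonnegativity of the solutions, the sign of the loss terms, and the bounds already recorded in the proof of Proposition \ref{prop:existence}.
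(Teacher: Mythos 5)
Your overall strategy --- induction on $i$, base case $i=1$ using $Q_1^{+,n}=0$, dropping the nonnegative loss terms, the bound $F_i^{+,n}(c^n)\leq K_i^F\rho_1^n$, and parabolic regularity plus the embedding $W^{2,1}_p(\Omega_T)\hookrightarrow L^\infty(\Omega_T)$ for $p>2\geq N$ --- is exactly the paper's proof. However, there is one genuine gap at the very start: you assert that ``$\rho_1^n$ is bounded in $L^p(\Omega_T)$ uniformly in $n$ \ldots is exactly the standing hypothesis of the lemma.'' It is not. The lemma only assumes that the \emph{initial} mass $\rho_1^{in}$ lies in $L^p(\Omega)$; a uniform-in-$n$ space-time bound on $\rho_1^n$ is a nontrivial conclusion, not a hypothesis, and without it your estimate of $F_i^{+,n}$ has no content.

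The paper fills this step by invoking Proposition~\ref{cor:proche_2}: since the truncated mass satisfies $\partial_t\rho_1^n-\Delta_x(M_1^n\rho_1^n)=0$ with $a\leq M_1^n\leq b$ by \eqref{def_hyp:ab}, the duality Lemma~\ref{prop:duality} applies for exponents in $[2,p_0[$ (where the closeness condition \eqref{hyp:closeness} is automatic because $\constC_{m,2}\leq 1$), yielding $\left\Vert\rho_1^n\right\Vert_{L^r(\Omega_T)}\leq C\left\Vert\rho_1^{in}\right\Vert_{L^r(\Omega)}$ for $r=\min(p,p_0)>2$, uniformly in $n$. This is precisely why the lemma requires the hypothesis \eqref{def_hyp:ab} on the diffusion coefficients, which your argument never uses --- a sign that something essential was skipped. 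Once you insert this duality step (and work with the exponent $r=\min(p,p_0)$ rather than $p$ itself, since Proposition~\ref{cor:proche_2} only covers exponents below $p_0$), the rest of your induction goes through exactly as in the paper.
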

\begin{proof}
By Proposition~\ref{cor:proche_2}, $\left(\rho_1^n\right)_n$ is bounded in $L^r(\Omega_T)$, for $r=\min(p,p_0)>2$. From this we deduce that $\left(F_i^{+,n}(c^n)\right)_n$ is bounded in $L^r(\Omega_T)$. Indeed, remembering~\eqref{hyp:exist_frag_forte},
\begin{equation*}
F_i^{+,n}(c^n) = \sum_{j=1}^{n-i}\frac{B_{i+j}\beta_{i+j,i}}{i+j}(i+j)c_{i+j}^n \leq K_i^F\rho_1^n.
\end{equation*}
Noticing that $F_i^{-,n}(c^n),Q_i^{-,n}(c^n)\leq 0$ for all $i\in\N^*$, we get
\begin{equation*}
\partial_t c_i^n - d_i \Delta_x c_i^n \leq Q_i^{+,n}(c^n) + F_i^{+,n}(c^n).
\end{equation*}
For $i=1$ we have that $Q_i^{+,n}(c^n)=0$, and therefore 
\begin{equation*}
\partial_t c_1^n - d_1 \Delta_x c_1^n \leq F_1^{+,n}(c^n)\in L^{r}(\Omega_T).
\end{equation*}
Since $r>2$, $N\leq 2$ and $c_1\geq 0$, the regularizing properties of the heat equation yield that $\left(c_1^n\right)_n$ is bounded in $L^{\infty}(\Omega_T)$.
Then for any integer $i\geq 2$, since 
\begin{align*}
\left\Vert Q_i^{+,n}(c^n)\right\Vert_{L^r(\Omega_T)} &\leq \left(\vert\Omega\vert T\right)^{\frac{1}{r}}\left\Vert Q_i^{+,n}(c^n)\right\Vert_{L^{\infty}(\Omega_T)}\\
& \leq \frac{\left(\vert\Omega\vert T\right)^{\frac{1}{r}}}{2}\sum_{j=1}^{i-1}a_{i,j}\left\Vert c_{i-j}^n\right\Vert_{L^{\infty}(\Omega_T)}\left\Vert c_j^n\right\Vert_{L^{\infty}(\Omega_T)}
\end{align*}
involves only $c_j^n$ for $j<i$, we can conclude by induction.
\end{proof}

We can now get $L^p$ estimates on the mass $\rho_1$, assuming the convergence~\eqref{hyp:convergence_di} of the diffusion coefficients.
\newline  \textit{Proof of Proposition~\ref{th:first_moment_convergence}.}
For any $I\in\N^*$, we define
\begin{equation*}
a^I:=\inf\limits_{i\geq I} d_i \quad \text{and} \quad b^I:=\sup\limits_{i\geq I} d_i.
\end{equation*}
Since $(d_i)$ converges toward a positive limit, there exists a positive integer $I$ such that
\begin{equation}
\label{eq:hyp_I}
\frac{b^I-a^I}{b^I+a^I}\constC_{\frac{a^I+b^I}{2},p'}<1.
\end{equation}
We fix such an $I$ and then consider, for $n\geq I$,
\begin{equation*}
\rho_1^{I,n}:=\sum_{i=I}^{n} ic_i^n \quad\text{and}\quad  M^{I,n}_1:=\frac{\sum_{i=I}^{n} i d_ic_i^n}{\sum_{i=I}^{n} i c_i^n}.
\end{equation*}
Since~\eqref{hyp:convergence_di} implies~\eqref{def_hyp:ab}, Lemma~\ref{lem:c_i} holds and it is enough to prove that $(\rho_1^{I,n})_n$ is bounded in $L^p(\Omega_T)$ to get that the full first moment $\left(\rho_1^n\right)_n$ is bounded in $L^p(\Omega_T)$. 

Using~\eqref{eq:F_faible_tronq} and the hypothesis $i=\sum_{j=1}^{i-1}j\beta_{i,j}$ of \eqref{hyp:modele}, we get that the contribution of fragmentation to the evolution of $\rho_1^{I,n}$ is nonpositive:
\begin{equation*}
\sum_{i=I}^{n}iF_i^n(c^n) = -\sum_{i=I}^{n}B_ic_i\left(i-\sum_{j=I}^{i-1}j\beta_{i,j}\right) \leq 0,
\end{equation*}
Therefore we get (using~\eqref{eq:Q_faible_tronq} and again the symmetry assumption in~\eqref{hyp:modele})
\begin{align*}
\partial_t \rho_1^{I,n} - \Delta_x \left(M_1^{I,n}\rho_1^{I,n}\right) &\leq \frac{1}{2} \sum_{i=1}^{n}\sum_{j=1}^{n}a_{i,j}c_i^nc_j^n\left((i+j)\mathds{1}_{i+j\geq I}-i\mathds{1}_{i\geq I}-j\mathds{1}_{j\geq I}\right)\\
&\leq \frac{1}{2} \sum_{i=1}^{n}\sum_{j=1}^{n}a_{i,j}c_i^nc_j^n\left(i\left(\mathds{1}_{i+j\geq I}-\mathds{1}_{i\geq I}\right)+j\left(\mathds{1}_{i+j\geq I}-\mathds{1}_{j\geq I}\right)\right)\\
&= \sum_{i=1}^{n}\sum_{j=1}^{n}a_{i,j}c_i^nc_j^ni\left(\mathds{1}_{i+j\geq I}-\mathds{1}_{i\geq I}\right).
\end{align*}
Using~\eqref{hyp:Cij}, we end up with
\begin{align*}
\partial_t \rho_1^{I,n} - \Delta_x \left(M_1^{I,n}\rho_1^{I,n}\right) &\leq C\sum_{i=1}^{I-1}i^2 c_i^n\sum_{j=1}^{n}jc_j^n \\
&= \psi_1^n\rho_1^{I,n} + \psi_2^n,
\end{align*}
where
\begin{equation*}
\psi_1^n= C\sum_{i=1}^{I-1}i^2 c_i^n \quad \text{and}\quad \psi_2^n=\psi_1\sum_{i=1}^{I-1}ic_i^n.
\end{equation*}
But $\left(\psi_1^n\right)_n$ and $\left(\psi_2^n\right)_n$ are bounded in $L^{\infty}(\Omega_T)$ by Lemma~\ref{lem:c_i}. Considering, for $k\in\{1,2\}$, $\mu_k$ a bound of $\left\Vert \psi_k^n \right\Vert_{L^{\infty}(\Omega_T)}$, we get
\begin{equation*}
\partial_t \rho_1^{I,n} - \Delta_x \left(M_1^{I,n}\rho_1^{I,n}\right) \leq  \mu_1 \rho_1^{I,n} + \mu_2,
\end{equation*}
and we can conclude the proof of Proposition~\ref{th:first_moment_convergence} thanks to Lemma~\ref{prop:duality_rhs}. \hfill $\qed$

\begin{remark}
Propositions~\ref{th:first_moment}, \ref{cor:proche_2} and \ref{th:first_moment_convergence} provide the kind of estimates on the mass $\rho_1^n$ that are needed to apply the existence result of Proposition~\ref{prop:existence} for the full system~\eqref{eq:syst_coag-frag}-\eqref{def:F}. In particular, Theorem~\ref{th:existence_gene} is nothing more than the combination of Proposition~\ref{cor:proche_2} and Proposition~\ref{prop:existence} case (i).
\end{remark}

\section{Superlinear moments and absence of gelation in the case of strong fragmentation}
\label{sec:preuve_frag_forte}

In this section we show how the result of Propositions~\ref{th:first_moment} and \ref{th:first_moment_convergence}, namely knowing that the mass $\rho_1$ lies in $L^p(\Omega_T)$, $p<+\infty$, allow us to prove Theorem~\ref{th:strong_frag}, that is the  creation and propagation of superlinear moments in presence of strong fragmentation, which prevents gelation. We start by presenting two elementary bounds that will be useful in the course of the proof.
\begin{lemma}
\label{lem:elem1}
Let $\xi\geq 0$, $C>0$ and $0<\theta<1$. Then
\begin{equation*}
\xi\leq C+\xi^{1-\theta} \quad\Rightarrow\quad \xi\leq\max\left(1,(1+C)^{\frac{1}{\theta}}\right).
\end{equation*}
\end{lemma}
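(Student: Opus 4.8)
The plan is to prove the implication $\xi \leq C + \xi^{1-\theta} \Rightarrow \xi \leq \max(1,(1+C)^{1/\theta})$ by a straightforward case distinction on the size of $\xi$. First I would dispense with the trivial case: if $\xi \leq 1$, then the conclusion holds immediately since $\max(1,(1+C)^{1/\theta}) \geq 1$. So from now on assume $\xi > 1$. In that regime $\xi^{1-\theta} < \xi$ (since $0 < \theta < 1$ forces $1-\theta < 1$ and $\xi>1$), but more usefully $\xi^{1-\theta} = \xi \cdot \xi^{-\theta} \leq \xi \cdot \xi^{-\theta}$, and I will want to bound $\xi^{1-\theta}$ in terms of $\xi$ cleanly.

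The key manipulation: starting from $\xi \leq C + \xi^{1-\theta}$, I divide by $\xi^{1-\theta}$ (legitimate since $\xi>1>0$), obtaining $\xi^{\theta} \leq C\xi^{\theta-1} + 1$. Now because $\xi > 1$ and $\theta - 1 < 0$, we have $\xi^{\theta-1} < 1$, hence $\xi^{\theta} \leq C\xi^{\theta-1} + 1 < C + 1$. Raising both sides to the power $1/\theta > 0$ (an increasing operation) gives $\xi < (1+C)^{1/\theta} \leq \max(1,(1+C)^{1/\theta})$, which is the desired bound. Combining the two cases $\xi \leq 1$ and $\xi > 1$ completes the argument.

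There is essentially no obstacle here; the only point requiring a moment's care is justifying that dividing by $\xi^{1-\theta}$ and using $\xi^{\theta-1}<1$ are valid, which both follow from having isolated the case $\xi > 1$ at the outset. One could alternatively argue by contradiction — suppose $\xi > \max(1,(1+C)^{1/\theta})$ and derive $\xi^{1-\theta} \geq \xi - C > \xi - \xi^{\theta}\cdot\xi^{1-\theta}/\xi^{\theta} $, but the direct case split above is cleaner and avoids any circularity. I would present the direct version.
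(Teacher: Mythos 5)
Your argument is correct: the case split on $\xi\leq 1$ versus $\xi>1$, followed by dividing by $\xi^{1-\theta}$ and using $\xi^{\theta-1}<1$ to get $\xi^{\theta}\leq C+1$, is a complete and valid proof. The paper in fact states Lemma~\ref{lem:elem1} without proof (treating it as elementary), so your write-up supplies exactly the standard justification that is implicitly intended.
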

\begin{lemma}
\label{lem:elem2}
Let $f$ be a nonnegative integrable function on $\R_+$, $m\in\N^*$ and $T>0$. Assume there exists $C_1,C_2>0$ and $0<\theta<1$ such that
\begin{equation*}
\int_0^T\frac{t^m}{m!}f(t)dt\leq C_1\int_0^T\frac{t^m}{m!}\left(f(t)\right)^{1-\theta}dt+C_2.
\end{equation*}
Then
\begin{equation*}
\int_0^T\frac{t^m}{m!}f(t)dt \leq 2\left(C_2 + \left(2C_1\right)^{\frac{1}{\theta}}\frac{T^{m+1}}{(m+1)!}\right).
\end{equation*}
\end{lemma}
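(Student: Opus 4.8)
The plan is to turn the hypothesis into a self-improving inequality for the single quantity $\ds I:=\int_0^T\frac{t^m}{m!}f(t)\,dt$, exactly as Lemma~\ref{lem:elem1} does at the level of numbers, the only difference being that here the "$\xi^{1-\theta}$ term" sits under an integral and must be absorbed before one can divide. The mechanism is a pointwise elementary bound: for every $\xi\geq 0$ and every $C_1>0$ one has
\begin{equation*}
C_1\xi^{1-\theta}\leq \tfrac12\,\xi + \tfrac12\,(2C_1)^{1/\theta}.
\end{equation*}
Indeed, if $\xi\leq (2C_1)^{1/\theta}$ then $\xi^{1-\theta}\leq (2C_1)^{(1-\theta)/\theta}$ and hence $C_1\xi^{1-\theta}\leq C_1(2C_1)^{(1-\theta)/\theta}=\tfrac12(2C_1)^{1/\theta}$; while if $\xi>(2C_1)^{1/\theta}$ then $\xi^{\theta}>2C_1$, so $C_1\xi^{1-\theta}=\frac{C_1}{\xi^{\theta}}\,\xi<\tfrac12\xi$. (One may equivalently invoke Young's inequality with exponents $1/(1-\theta)$ and $1/\theta$; the dichotomy just makes the constant explicit.)

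Applying this bound with $\xi=f(t)$ and integrating against the nonnegative weight $\frac{t^m}{m!}$ over $[0,T]$ gives
\begin{equation*}
C_1\int_0^T\frac{t^m}{m!}\bigl(f(t)\bigr)^{1-\theta}dt \leq \tfrac12\int_0^T\frac{t^m}{m!}f(t)\,dt + \tfrac12\,(2C_1)^{1/\theta}\int_0^T\frac{t^m}{m!}\,dt,
\end{equation*}
and since $\ds\int_0^T\frac{t^m}{m!}\,dt=\frac{T^{m+1}}{(m+1)!}$, combining with the hypothesis yields
\begin{equation*}
I \leq \tfrac12\, I + \tfrac12\,(2C_1)^{1/\theta}\frac{T^{m+1}}{(m+1)!} + C_2.
\end{equation*}
Here one must check that $I<+\infty$ so that the term $\tfrac12 I$ can legitimately be moved to the left-hand side: this is immediate because $f$ is integrable on $\R_+$ and $t\mapsto t^m/m!$ is bounded on the compact interval $[0,T]$. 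Absorbing $\tfrac12 I$ and multiplying by $2$ then gives $\ds I\leq (2C_1)^{1/\theta}\frac{T^{m+1}}{(m+1)!}+2C_2$, which is bounded above by $\ds 2\Bigl(C_2+(2C_1)^{1/\theta}\frac{T^{m+1}}{(m+1)!}\Bigr)$, as claimed.

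There is no serious obstacle in this argument; the only point requiring a word of care is the finiteness of $I$ used in the absorption step, and the bookkeeping needed to land on exactly the stated constants (which is why the explicit pointwise inequality above is preferable to a bare application of Young's inequality). If one is willing to accept a slightly worse constant, any split of the form $C_1\xi^{1-\theta}\leq\varepsilon\xi+c(\varepsilon,\theta,C_1)$ with $\varepsilon<1$ works identically.
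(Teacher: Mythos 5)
Your proof is correct and rests on the same dichotomy as the paper's own argument --- the threshold $(f(t))^{\theta}\gtrless 2C_1$ --- merely repackaged as a pointwise Young-type inequality $C_1\xi^{1-\theta}\leq\tfrac12\xi+\tfrac12(2C_1)^{1/\theta}$ followed by absorption of $\tfrac12 I$ (legitimate since $I<\infty$ by the integrability of $f$ and the boundedness of $t^m/m!$ on $[0,T]$), whereas the paper splits the integral over the two regions $\{(f(t))^{\theta}\geq 2C_1\}$ and $\{(f(t))^{\theta}<2C_1\}$ and rearranges without an explicit absorption step. Both routes land on the stated constant; yours in fact yields the slightly sharper bound $2C_2+(2C_1)^{1/\theta}\frac{T^{m+1}}{(m+1)!}$.
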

\begin{proof}
Combining
\begin{align*}
\frac{1}{2}\int_0^T\frac{t^m}{m!}f(t)\mathds{1}_{\left(f(t)\right)^{\theta}\geq 2C_1}dt &\leq \int_0^T\frac{t^m}{m!}f(t)\left(1-\frac{C_1}{\left(f(t)\right)^{\theta}}\right)\mathds{1}_{\left(f(t)\right)^{\theta}\geq 2C_1}dt\\
&\leq C_2 - \int_0^T\frac{t^m}{m!}f(t)\left(1-\frac{C_1}{\left(f(t)\right)^{\theta}}\right)\mathds{1}_{\left(f(t)\right)^{\theta}< 2C_1}dt\\
&\leq C_2 + C_1\int_0^T\frac{t^m}{m!}\left(f(t)\right)^{1-\theta}\mathds{1}_{\left(f(t)\right)^{\theta}< 2C_1}dt\\
&\leq C_2 + \frac{\left(2C_1\right)^{\frac{1}{\theta}}}{2}\frac{T^{m+1}}{(m+1)!},
\end{align*}
with
\begin{equation*}
\int_0^T\frac{t^m}{m!}f(t)\mathds{1}_{\left(f(t)\right)^{\theta}< 2C_1}dt \leq \left(2C_1\right)^{\frac{1}{\theta}}\frac{T^{m+1}}{(m+1)!}
\end{equation*}
we get the announced estimate.
\end{proof}

We are now ready to prove Theorem~\ref{th:strong_frag}. The outline of the proof is the following. First, we use the assumption~\eqref{hyp:coef_frag_forte} to get a bound for any superlinear moment $\rho_l^n$ in terms of other moments, more precisely in terms of $\rho_{l+\gamma-1}^n$ and lower order moments. Then we use interpolation estimates to bound all the lower order moment in terms of $\rho_1^n$ and $\rho_{l+\gamma-1}^n$. But $\left(\rho_1^n\right)_n$ can be bounded by Proposition~\ref{th:first_moment_convergence}, and from this we deduce a bound on  $\rho_{l+\gamma-1}^n$ that does not depend on $n$. We then bootstrap the estimate to get a bound for $\rho_{l+m(\gamma-1)}^n$ for all $m\in\N^*$. Finally, these \emph{a priori} estimates enable us to apply Proposition~\ref{prop:existence}, and to get a bound on the moments $\rho_{l+m(\gamma-1)}$ of the full system.

\medskip

\noindent \textit{Proof of Theorem~\ref{th:strong_frag}.}
For $n\in\N^*$, we consider $c^n$ the solution of~\eqref{eq:approx_system}-\eqref{eq:approx_F}. 
For $l>1$ we introduce
\begin{equation*}
M_l^n=\frac{\sum_{i=1}^{n} i^l d_ic_i^n}{\sum_{i=1}^{n} i^l c_i^n}.
\end{equation*}
Using the weak formulation~\eqref{eq:Q_faible_tronq}-\eqref{eq:F_faible_tronq}, assumption~\eqref{hyp:coef_frag_forte} and the symmetry of~\eqref{hyp:modele}, we have
\begin{align}
\label{eq:rho_l_1}
&\partial_t \rho_l^n - \Delta_x \left(M_l^n\rho_l^n\right) \nonumber\\
&\qquad\qquad = \frac{1}{2}\sum_{\substack{1\leq i,j\leq n \\ i+j\leq n }}a_{i,j}c_i^nc_j^n\left((i+j)^l-i^l-j^l\right) - \sum_{i=2}^{n}B_ic_i^n\left(i^l-\sum_{j=1}^{i-1}j^l\beta_{i,j}\right)\nonumber \\
&\qquad\qquad \leq C_Q\sum_{i=1}^{n}\sum_{j=1}^{n}i^{\alpha}j^{\beta} \left((i+j)^l-i^l-j^l\right)c_i^nc_j^n - C_F\sum_{i=2}^{n}i^{\gamma}c_i^n\left(i^l-\sum_{j=1}^{i-1}j^l\beta_{i,j}\right).
\end{align}
To bound from above the coagulation term, we use the existence of a constant $C_{Q,l}>0$ such that, 
\begin{equation*}
(i+j)^l-i^l-j^l \leq C_{Q,l}(i^{l-1}j+ij^{l-1}),\quad \forall~i,j\in\N^*,
\end{equation*}
see for instance~\cite{Car92}. To bound from below the fragmentation term, we estimate (using~\eqref{hyp:modele})
\begin{align}
\label{eq:borne_beta}
i^l-\sum_{j=1}^{i-1}j^l\beta_{i,j} &= i^l\left(1-\frac{1}{i}\sum_{j=1}^{i-1}\left(\frac{j}{i}\right)^{l-1}j\beta_{i,j}\right) \nonumber\\
&\geq i^l\left(1-\left(\frac{i-1}{i}\right)^{l-1}\frac{1}{i}\sum_{j=1}^{i-1}j\beta_{i,j}\right) \nonumber\\
&= i^l\left(1-\left(1-\frac{1}{i}\right)^{l-1}\right) \nonumber\\
&\geq \min(l-1,1) i^{l-1},
\end{align}
the last inequality coming from the concavity (if $1<l\leq 2$) or the convexity (if $l\geq 2$) of $x\mapsto(1-x)^{l-1}$. Introducing $C_{F,l}=\min(l-1,1)>0$ and going back to~\eqref{eq:rho_l_1}, we end up with
\begin{align*}
&\partial_t \rho_l^n - \Delta_x \left(M_l^n\rho_l^n\right) \leq C_QC_{Q,l}\left(\rho_{\alpha+l-1}^n\rho_{\beta+1}^n+\rho_{\alpha+1}^n\rho_{\beta+l-1}^n\right)-C_FC_{F,l}\left(\rho_{\gamma+l-1}^n-c_1^n\right).
\end{align*}
Integrating on $\Omega$ and using the homogeneous Neumann boundary conditions, we get that
\begin{equation}
\label{eq:rho_l_2}
\frac{d}{dt}\int\limits_{\Omega}\rho_l^n + C_FC_{F,l}\int\limits_{\Omega}\rho_{\gamma+l-1}^n \leq C_QC_{Q,l}\int\limits_{\Omega}\left(\rho_{\alpha+l-1}^n\rho_{\beta+1}^n+\rho_{\alpha+1}^n\rho_{\beta+l-1}^n\right)+ C_FC_{F,l}\int\limits_{\Omega}c_1^n.
\end{equation}
Assuming that $l>2-(\gamma-\alpha)$ and $l>2-(\gamma-\beta)$, i.e. $\alpha+1\leq \gamma+l-1$ and $\beta+1\leq \gamma+l-1$ (these are the assumptions on $k$ in Theorem~\ref{th:strong_frag}), H\"older's inequality yields the following interpolation estimates
\begin{equation*}
\rho_{\alpha+1}^n\leq\left(\rho_{1}^n\right)^{\frac{\gamma+l-\alpha-2}{\gamma+l-2}}\left(\rho_{\gamma+l-1}^n\right)^{\frac{\alpha}{\gamma+l-2}},\qquad \rho_{\beta+1}^n\leq\left(\rho_{1}^n\right)^{\frac{\gamma+l-\beta-2}{\gamma+l-2}}\left(\rho_{\gamma+l-1}^n\right)^{\frac{\beta}{\gamma+l-2}},
\end{equation*}
and
\begin{equation*}
\rho_{\alpha+l-1}^n\leq\left(\rho_{1}^n\right)^{\frac{\gamma-\alpha}{\gamma+l-2}}\left(\rho_{\gamma+l-1}^n\right)^{\frac{\alpha+l-2}{\gamma+l-2}}, \qquad \rho_{\beta+l-1}^n\leq\left(\rho_{1}^n\right)^{\frac{\gamma-\beta}{\gamma+l-2}}\left(\rho_{\gamma+l-1}^n\right)^{\frac{\beta+l-2}{\gamma+l-2}},
\end{equation*}
Notice that in the case where $\alpha+l-1<1$ or $\beta+l-1<1$, the last two interpolation estimates are no longer valid, but we can then simply use $\rho_{\alpha+l-1}^n\leq\rho_{1}^n$ and $\rho_{\beta+l-1}^n\leq\rho_{1}^n$. The rest of the proof is then identical, up to different exponents for $\rho_1^n$ and $\rho_{\gamma+l-1}^n$. The only property that we are going to use, which holds in every case, is that the obtained exponent for $\rho_{\gamma+l-1}^n$ is strictly less than 1.

In the case $\alpha+l-1\geq 1$ and $\beta+l-1\geq 1$, \eqref{eq:rho_l_2} then becomes
\begin{align*}
&\frac{d}{dt}\int\limits_{\Omega}\rho_l^n + C_FC_{F,l}\int\limits_{\Omega}\rho_{\gamma+l-1}^n \\
&\qquad\qquad\qquad\qquad\qquad \leq 2C_QC_{Q,l}\int\limits_{\Omega}\left(\rho_{1}^n\right)^{\frac{2\gamma+l-(\alpha+\beta+2)}{\gamma+l-2}}\left(\rho_{\gamma+l-1}^n\right)^{\frac{\alpha+\beta+l-2}{\gamma+l-2}} + C_FC_{F,l}\int\limits_{\Omega}c_1^n.
\end{align*}
Then, for any $0\leq t\leq T$, if we integrate between $t$ and $T$ and use H\"older's inequality, we end up with
\begin{align*}
&\int\limits_{\Omega}\rho_l^n(T) + C_FC_{F,l}\int_{t}^T\int\limits_{\Omega}\rho_{\gamma+l-1}^n \\
&\quad \leq \int\limits_{\Omega}\rho_l^n(t) + 2C_QC_{Q,l}\int_{t}^T\int\limits_{\Omega}\left(\rho_{1}^n\right)^{\frac{2\gamma+l-(\alpha+\beta+2)}{\gamma+l-2}}\left(\rho_{\gamma+l-1}^n\right)^{\frac{\alpha+\beta+l-2}{\gamma+l-2}} + C_FC_{F,l}\int_{t}^T\int\limits_{\Omega}c_1^n\\
&\quad \leq \int\limits_{\Omega}\rho_l^n(t) + 2C_QC_{Q,l}\left\Vert\rho_{1}^n\right\Vert_{L^p(\Omega_T)}^{\frac{2\gamma+l-(\alpha+\beta+2)}{\gamma+l-2}}\left(\int_{t}^T\int\limits_{\Omega}\rho_{\gamma+l-1}^n\right)^{\frac{\alpha+\beta+l-2}{\gamma+l-2}} + C_FC_{F,l}\left\Vert\rho_{1}^n\right\Vert_{L^1(\Omega_T)},
\end{align*}
where $p=\frac{2\gamma+l-(\alpha+\beta+2)}{\gamma-(\alpha+\beta)}=1+\frac{\gamma+l-2}{\gamma-(\alpha+\beta)}$. But thanks to Proposition~\ref{th:first_moment_convergence}, we have that for any $p<\infty$, $\left(\rho_{1}^n\right)_n$ is bounded in $L^p(\Omega_T)$. Renaming the constants, we have shown that, for all $l>1$ also satisfying $l>2-(\gamma-\alpha)$ and $l>2-(\gamma-\beta)$, for all $n\in\N^*$ and all $0\leq t\leq T$,
\begin{equation}
\label{eq:rho_l_3}
\int\limits_{\Omega}\rho_l^n(T) + \tilde C_{1,l}\int_{t}^T\int\limits_{\Omega}\rho_{l+\gamma-1}^n \leq \int\limits_{\Omega}\rho_l^n(t) + \tilde C_{2,l}\left(\int_{t}^T\int\limits_{\Omega}\rho_{l+\gamma-1}^n\right)^{1-\theta_l} + \tilde C_{3,l},
\end{equation}
where $0<\theta_l<1$, $0<\tilde C_{j,l}<\infty$, $1\leq j\leq 3$, and crucially none of these constants depend on $n$. We are now ready to prove by induction that, for all $m\in\N^*$ and all $T>0$, there exists $\check C<\infty$ (depending on $k$, $m$, $\vert\Omega\vert$ and $T$) such that
\begin{equation}
\label{eq:rho_m}
\int_0^T\frac{t^{m-1}}{(m-1)!}\int\limits_{\Omega}\rho_{k+m(\gamma-1)}^n(t,x)dtdx \leq \check C,\quad \forall~n\in\N^*.
\end{equation}

We start by proving~\eqref{eq:rho_m} for $m=1$. Using \eqref{eq:rho_l_3} with $l=k$ and $t=0$ we get
\begin{equation*}
\tilde C_{1,k}\int_{0}^T\int\limits_{\Omega}\rho_{k+\gamma-1}^n \leq \int\limits_{\Omega}\rho_k^{in} + \tilde C_{3,k} + \tilde C_{2,k}\left(\int_{0}^T\int\limits_{\Omega}\rho_{k+\gamma-1}^n\right)^{1-\theta_k},
\end{equation*}
and Lemma~\ref{lem:elem1} then yields~\eqref{eq:rho_m} for $m=1$. We now take $m\in\N^*$ and consider~\eqref{eq:rho_l_3} with $l=k+m(\gamma-1)$. We get
\begin{align*}
\tilde C_{1,k+m(\gamma-1)}\int_{t}^T\int\limits_{\Omega}\rho_{k+(m+1)(\gamma-1)}^n &\leq \quad \int\limits_{\Omega}\rho_{k+m(\gamma-1)}^n(t) + \tilde C_{3,k+m(\gamma-1)}\\
&\quad + \tilde C_{2,k+m(\gamma-1)}\left(\int_{t}^T\int\limits_{\Omega}\rho_{k+(m+1)(\gamma-1)}^n\right)^{1-\theta_{k+m(\gamma-1)}}.
\end{align*}
Multiplying by $\frac{t^{m-1}}{(m-1)!}$, integrating for $t$ between $0$ and $T$, and assuming that~\eqref{eq:rho_m} holds for $m$, we end up with
\begin{align*}
&\tilde C_{1,k+m(\gamma-1)}\int_{0}^T\frac{t^{m-1}}{(m-1)!}\int_{t}^T\int\limits_{\Omega}\rho_{k+(m+1)(\gamma-1)}^n \leq \\
& \qquad \check C + \tilde C_{3,k+m(\gamma-1)}\frac{T^m}{m!} + \tilde C_{2,k+m(\gamma-1)}\int_{0}^T\frac{t^{m-1}}{(m-1)!}\left(\int_{t}^T\int\limits_{\Omega}\rho_{k+(m+1)(\gamma-1)}^n\right)^{1-\theta_{k+m(\gamma-1)}}.
\end{align*}
Lemma~\ref{lem:elem2} then yields a bound, that does not depend on $n$, for 
\begin{equation*}
\int_{0}^T\frac{t^{m-1}}{(m-1)!}\int_{t}^T\int\limits_{\Omega}\rho_{k+(m+1)(\gamma-1)}^n=\int_{0}^T\frac{t^{m}}{(m)!}\int\limits_{\Omega}\rho_{k+(m+1)(\gamma-1)}^n.
\end{equation*}
Thus \eqref{eq:rho_m} holds for $m+1$, and then for all $m\in\N^*$ by induction. 

Then, notice that~\eqref{eq:rho_m} with $m=1$ says exactly that the superlinear moment $\left(\rho_{k+\gamma-1}^n\right)_n$ is bounded in $L^1(\Omega_T)$. Therefore we can apply Proposition~\ref{prop:existence} case (ii) to extract from $\left(c^n\right)_n$ a weak solution $c$ of~\eqref{eq:syst_coag-frag}-\eqref{hyp:modele}. Besides, as we saw in the proof of Proposition~\ref{prop:existence} case (ii), the $L^1$ bound of the superlinear moment $\left(\rho_{k+\gamma-1}^n\right)_n$ allows us to prove that $\left(\rho_1^n\right)_n$ converges to $\rho_1$ in $L^1(\Omega_T)$. But for all $t\geq 0$ and all $n\in\N^*$ we have (rigorously)
\begin{equation*}
\int\limits_{\Omega}\rho_1^n(t) = \int\limits_{\Omega}\left(\rho_1^{in}\right)^n,
\end{equation*}
and since 
\begin{equation*}
\int\limits_{\Omega}\left(\rho_1^{in}\right)^n \underset{n\to\infty}{\longrightarrow}\int\limits_{\Omega}\rho_1^{in},
\end{equation*}
we get that, for almost all $t\geq 0$,
\begin{equation*}
\int\limits_{\Omega}\rho_1(t) = \int\limits_{\Omega}\rho_1^{in},
\end{equation*}
i.e. there is no gelation.

Finally, by Fatou's Lemma the bound~\eqref{eq:rho_m} is carried over to the moments of the limiting solution $c$, i.e. for all $m\in\N^*$
\begin{equation}
\label{eq:rho_m_final}
\int_0^T\frac{t^{m-1}}{(m-1)!}\int\limits_{\Omega}\rho_{k+m(\gamma-1)}(t,x)dtdx < \infty.
\end{equation}
\hfill $\qed$

\begin{remark}
\label{rem:assumption_th2}
The assumption~\eqref{hyp:convergence_di} on the convergence of the diffusion coefficients, the assumption $c^{in}_i\in L^{\infty}(\Omega)$ for all $i\geq 1$, and the assumption $N\leq 2$ were only used here to apply Proposition~\ref{th:first_moment_convergence} and get a bound on $\left(\rho_1^n\right)_n$ in $L^p(\Omega_T)$, for all $p<\infty$. Therefore, if we fix a $p>2$ such that $\rho_1^{in}\in L^{p}(\Omega)$, consider an arbitrary dimension $N$, assume~\eqref{hyp:closeness} instead of~\eqref{hyp:convergence_di} and remove the assumption $c^{in}_i\in L^{\infty}(\Omega)$ for all $i\geq 1$, we can now use Proposition~\ref{th:first_moment} to get a bound on $\left(\rho_1^n\right)_n$ in $L^p(\Omega_T)$ for this fixed $p$. The estimate~\eqref{eq:rho_l_3} then holds for all $l>1$ such that $1+\frac{\gamma+l-2}{\gamma-(\alpha+\beta)}\leq p$, and so do~\eqref{eq:rho_m} and~\eqref{eq:rho_m_final}, for all $m$ such that $1+\frac{\gamma+k+(m-1)(\gamma-1)-2}{\gamma-(\alpha+\beta)}\leq p$.
\end{remark}

\begin{remark}
\label{rem:beta}
As already pointed out, in Theorem~\ref{th:strong_frag} we made no assumption on the coefficients $\beta_{i,j}$ (aside from the microscopic mass conservation~\eqref{hyp:modele}). A fairly generic assumption one can add is the following:
\begin{equation}
\label{hyp:beta}
\forall~l>1,\ \exists C_{l}>0,\ \forall~i\in\N^*, \quad i^l-\sum_{j=1}^{i-1}j^l\beta_{i,j} \geq C_l i^{l}.
\end{equation}
A similar assumption is made (in the particular case of binary fragmentation) in~\cite{Cos95} for the homogeneous case.

While~\eqref{hyp:beta} may not hold for some coagulation-fragmentation models (it is for instance not satisfied for the Becker-D\"oring \cite{BalCarPen86} model), it does still hold for a broad range of models. Indeed, typical examples of fragmentation coefficients are
\begin{equation*}
B_i=i^{\gamma},\ \gamma\in\R \quad \text{and}\quad \beta_{i,j}=\frac{i}{\sum_{k=1}^{i-1}k^{1+\nu}}j^{\nu}, \nu>-2,
\end{equation*}
see \cite{LauMis02} and the references therein. For such coefficients, one can check that~\eqref{hyp:beta} is always satisfied.

Assuming~\eqref{hyp:beta}, we can then use it instead of~\eqref{eq:borne_beta} in the proof of Theorem~\ref{th:strong_frag}, and gain one power of $i$. The moment of higher order that appears from the fragmentation term is then $\rho_{l+\gamma}$ (in place of $\rho_{l+\gamma-1}$), and the rest of the proof still holds if we only assume $\alpha+\beta<\gamma+1$ and $\gamma\geq 0$ (instead of $\alpha+\beta<\gamma$ and $\gamma\geq 1$).
\end{remark}

\section{Propagation of smoothness}
\label{sec:smooth}

This section is devoted to the proof of Theorem~\ref{th:all_moments_w}. The main argument is to show that a control of all moments $\rho_k$, $k\geq 0$, in all spaces $L^p(\Omega_T)$, $p<\infty$, allows for propagation of smoothness (Proposition~\ref{prop:smooth}). We begin with a lemma to control the coagulation and fragmentation terms $Q_i$ and $F_i$. 

\begin{lemma}\label{lem:ci_to_Fi}
Let $\Omega$ be a smooth bounded domain of $\R^N$ and $s\in\N$. Assume that $(c_i)_{i\in\N^*}$ is a sequence of positive functions defined on $\Omega_T$ such that
\begin{equation}
\label{hyp:ikc_i}
\sup\limits_{i\in\N^*} \left\Vert i^k c_i \right\Vert_{W^{s,p}(\Omega_T)} < +\infty,\quad \forall k\geq 0,\ \forall p<+\infty.
\end{equation}
Assume also~\eqref{def:Q}-\eqref{hyp:modele}. For the coagulation and fragmentation coefficients, assume that there exists $C\geq 0$ and $\gamma_{max}\in\R$ such that, for all $i,j\in\N^*$
\begin{equation}
\label{hyp:lem_QF}
a_{i,j}\leq Cij \quad \text{and}\quad B_i\leq C i^{\gamma_{\max}}.
\end{equation}
\par
Then, the following estimates hold
\begin{equation}
\label{eq:ikQ_i}
\sup\limits_{i\in\N^*} \left\Vert i^k Q_i(c) \right\Vert_{W^{s,p}(\Omega_T)} < +\infty,\quad \forall k\geq 0,\ \forall p<+\infty.
\end{equation}
and
\begin{equation}
\label{eq:ikF_i}
\sup\limits_{i\in\N^*} \left\Vert i^k F_i(c) \right\Vert_{W^{s,p}(\Omega_T)} < +\infty,\quad \forall k\geq 0,\ \forall p<+\infty.
\end{equation}
\end{lemma}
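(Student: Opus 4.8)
The strategy is to exploit the fact that each $Q_i$ and $F_i$ is, up to harmless rearrangements, a (possibly infinite) sum of terms that are either linear in the $c_j$ or products of two $c_j$'s, weighted by polynomially-growing coefficients. The hypothesis~\eqref{hyp:ikc_i} gives us, for every $k\geq 0$ and every $p<\infty$, a uniform bound $\sup_i \|i^k c_i\|_{W^{s,p}(\Omega_T)}\leq \Lambda_{k,p}$, and $W^{s,p}(\Omega_T)$ is a Banach algebra for $p$ large enough (since $\Omega_T\subset\R^{N+1}$ is bounded and $sp>N+1$ then holds after first raising $p$); this lets us multiply two controlled functions and keep control. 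The key point is that the polynomial weights $a_{i,j}\leq Cij$ and $B_i\leq Ci^{\gamma_{\max}}$ can always be absorbed by trading against extra powers of $i$ or $j$ on the $c$'s, of which we have arbitrarily many to spare by~\eqref{hyp:ikc_i}.

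\medskip
\noindent\textit{Step 1: the fragmentation term.} Fix $k\geq 0$ and $p$ (WLOG large). Write
\begin{equation*}
i^k F_i(c)=i^k\sum_{j=1}^{\infty}B_{i+j}\beta_{i+j,i}c_{i+j}-i^k B_i c_i.
\end{equation*}
The loss term is immediate: $\|i^k B_i c_i\|_{W^{s,p}}\leq C\|i^{k+\gamma_{\max}}c_i\|_{W^{s,p}}\leq C\Lambda_{k+\gamma_{\max},p}$. For the gain term, use $i\leq i+j=:r$, $\beta_{r,i}\leq \beta_{r,i}$, and crucially the mass-conservation bound $\sum_{i=1}^{r-1} i\beta_{r,i}=r$, which gives $\beta_{r,i}\leq r/i\leq r$. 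Then $i^k B_{i+j}\beta_{i+j,i}\leq r^{k}\cdot C r^{\gamma_{\max}}\cdot \beta_{r,i}$, and summing over $j$ (equivalently over $r>i$), for each fixed $r$ the inner coefficient sum $\sum_{i=1}^{r-1} i^k\beta_{r,i}\leq r^{k-1}\sum_i i\beta_{r,i}=r^{k}$ (for $k\geq 1$; for $0\le k<1$ bound $i^k\le i$ similarly). Thus, after reindexing so that the summand is attached to $c_r$,
\begin{equation*}
\Bigl\|i^k\!\!\sum_{j\geq 1}\!B_{i+j}\beta_{i+j,i}c_{i+j}\Bigr\|_{W^{s,p}}\leq C\sum_{r\geq 2} r^{k+\gamma_{\max}}\Bigl(\sum_{i=1}^{r-1}\tfrac{\beta_{r,i}}{\beta\text{-weight}}\Bigr)\|c_r\|_{W^{s,p}}\leq C\sum_{r\geq 2} r^{-2}\,\|r^{k+\gamma_{\max}+3}c_r\|_{W^{s,p}},
\end{equation*}
where one extra pair of powers of $r$ is spent to produce a convergent factor $r^{-2}$; the remaining sum is $\leq C\Lambda_{k+\gamma_{\max}+3,p}\sum_{r\geq 2}r^{-2}<\infty$. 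This proves~\eqref{eq:ikF_i}.

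\medskip
\noindent\textit{Step 2: the coagulation term.} Similarly split $i^k Q_i(c)=\tfrac12 i^k\sum_{j=1}^{i-1}a_{i-j,j}c_{i-j}c_j - i^k c_i\sum_{j\geq 1}a_{i,j}c_j$. For the loss term, $a_{i,j}\leq Cij$ gives $i^k c_i\sum_j a_{i,j}c_j\leq C\,i^{k+1}c_i\sum_j j\,c_j=C\,i^{k+1}c_i\,\rho_1$, and $\rho_1=\sum_j j c_j\leq (\sum_j j^{-2})^{1/2}\bigl(\sum_j j^{4}c_j^2\bigr)^{1/2}$ is controlled in $W^{s,p}$ by the algebra property and~\eqref{hyp:ikc_i} (e.g. $\|\rho_1\|_{W^{s,p}}\leq C\sum_j j^{-2}\|j^{3}c_j\|_{W^{s,p}}\le C\Lambda_{3,p}$); then multiply by $i^{k+1}c_i$, again using the Banach-algebra structure, to bound the loss term by $C\Lambda_{k+1,p}\Lambda_{3,p}$. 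For the gain term, substitute $i^k=((i-j)+j)^k\leq 2^k((i-j)^k+j^k)$ and $a_{i-j,j}\leq C(i-j)j$, so each summand is $\leq C\bigl((i-j)^{k+1}j+(i-j)j^{k+1}\bigr)c_{i-j}c_j$; summing the double series and reindexing, $\sum_i i^k\sum_{j<i}a_{i-j,j}c_{i-j}c_j\leq C\sum_{a,b\geq1}(a^{k+1}b+ab^{k+1})c_a c_b$, and inserting convergence factors $a^{-2}b^{-2}$ at the cost of extra powers yields a bound $\leq C\bigl(\sum_a a^{-2}\|a^{k+3}c_a\|_{W^{s,p}}\bigr)\bigl(\sum_b b^{-2}\|b^{3}c_b\|_{W^{s,p}}\bigr)<\infty$ by the algebra property. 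This gives~\eqref{eq:ikQ_i}.

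\medskip
\noindent\textit{Main obstacle.} The routine part is the weight-juggling; the genuine point to handle carefully is (a) justifying that the infinite sums defining $Q_i,F_i$ converge in $W^{s,p}(\Omega_T)$ — not merely pointwise a.e. — so that one may differentiate term by term, which follows from the absolute convergence estimates above since $W^{s,p}$ is complete; and (b) ensuring $W^{s,p}(\Omega_T)$ is an algebra, which forces us to work with $p$ large (so $sp>N+1$) and then recover all finite $p$ by Hölder on the bounded domain $\Omega_T$. One small subtlety in Step 1 is extracting the convergence factor $r^{-2}$ from the $\beta$-coefficients: this is exactly where the normalization $\sum_i i\beta_{r,i}=r$ is used twice — once to bound $\beta_{r,i}$ and once to bound $\sum_i i^k\beta_{r,i}$ — and where the extra powers of $i$ available in~\eqref{hyp:ikc_i} pay for the remaining summability. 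No assumption on a lower bound for $B_i$ or on $\gamma_{\max}\geq 1$ is needed here; only the upper bounds~\eqref{hyp:lem_QF} matter.
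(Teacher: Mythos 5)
Your proposal is essentially correct, and for the fragmentation term it follows exactly the paper's route: the paper also bounds $\beta_{i+j,i}\leq (i+j)/i$ via the mass-conservation identity in~\eqref{hyp:modele}, absorbs the polynomial weights $B_{i+j}\leq C(i+j)^{\gamma_{max}}$ into a higher power of $(i+j)$ attached to $c_{i+j}$, and spends two extra powers to produce the convergent factor $\sum_j (1+j)^{-2}$, ending with a bound by $\sup_i\Vert i^{\gamma_{max}+k+2}c_i\Vert_{W^{s,p}}$. One remark on your Step 1: the detour through the ``inner coefficient sum'' $\sum_{i=1}^{r-1}i^k\beta_{r,i}\leq r^k$ for fixed $r$ is irrelevant here — in $F_i^+(c)$ the index $i$ is fixed and one sums only over $j$ (i.e.\ over $r>i$), so the single bound $\beta_{r,i}\leq r/i$ that you also state is all that is needed; the corresponding display in your write-up is garbled, but the final estimate with exponent $k+\gamma_{max}+3$ is correct and obtained by the intended mechanism. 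The main genuine difference is in Step 2: the paper does not prove~\eqref{eq:ikQ_i} at all but cites \cite{BreDevFel16} for it, whereas you supply a self-contained argument (splitting $i^k\leq 2^k((i-j)^k+j^k)$, using $a_{i,j}\leq Cij$, and controlling products via Leibniz/Hölder or the Banach-algebra property of $W^{s,p}(\Omega_T)$ for large $p$, recovering small $p$ on the bounded domain). That argument is sound — indeed it is the kind of argument the cited reference gives — and your identification of the product structure and the term-by-term convergence of the infinite sums in $W^{s,p}$ as the real technical points is accurate; note only that since~\eqref{hyp:ikc_i} holds for \emph{all} $p<\infty$, the full algebra property is not needed: Leibniz plus Hölder with exponent $2p$ already suffices.
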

\begin{proof}
The bound~\eqref{eq:ikQ_i} for the coagulation term was already proven in~\cite{BreDevFel16}. To get the bound~\eqref{eq:ikF_i} for the fragmentation term, we estimate (remembering~\eqref{def:F})
\begin{align*}
\left\Vert i^k F_i(c) \right\Vert_{W^{s,p}(\Omega_T)} &\leq \sum_{j=1}^{\infty}B_{i+j}\beta_{i+j,i} i^k\left\Vert c_{i+j}\right\Vert_{W^{s,p}(\Omega_T)} + B_i i^k\left\Vert c_i \right\Vert_{W^{s,p}(\Omega_T)}.
\end{align*}
But by~\eqref{hyp:modele}, $\beta_{i+j,i}\leq\frac{i+j}{i}$, and using~\eqref{hyp:lem_QF} we end up with
\begin{align*}
\left\Vert i^k F_i(c) \right\Vert_{W^{s,p}(\Omega_T)} &\leq C\sum_{j=1}^{\infty}(i+j)^{\gamma_{max}+1}i^{k-1}\left\Vert c_{i+j}\right\Vert_{W^{s,p}(\Omega_T)} + Ci^{\gamma_{max}+k}\left\Vert c_i \right\Vert_{W^{s,p}(\Omega_T)} \\
&\leq C\sum_{j=0}^{\infty}\left\Vert (i+j)^{\gamma_{max}+k} c_{i+j}\right\Vert_{W^{s,p}(\Omega_T)} \\
&\leq C\sup_{i\in\N^*}\left\Vert i^{\gamma_{max}+k+2} c_{i}\right\Vert_{W^{s,p}(\Omega_T)} \sum_{j=0}^{\infty} \frac{1}{(i+j)^2} \\
&\leq C\sup_{i\in\N^*}\left\Vert i^{\gamma_{max}+k+2} c_{i}\right\Vert_{W^{s,p}(\Omega_T)} \sum_{j=0}^{\infty} \frac{1}{(1+j)^2}
\end{align*}
and this bound is finite by~\eqref{hyp:ikc_i}.
\end{proof}

As already stated in~\cite{BreDevFel16}, Lemma~\ref{lem:ci_to_Fi} can then be used to get $W^{s,p}$ estimates for solutions of~\eqref{eq:syst_coag-frag}-\eqref{hyp:modele}.
\begin{proposition}
\label{prop:smooth}
Let $\Omega$ be a smooth bounded domain of $\R^N$, and $T>0$. Assume~\eqref{hyp:lem_QF} for the coagulation and fragmentation coefficients and~\eqref{hyp:di} for the diffusion coefficients. Let $c$ be a solution of~\eqref{eq:syst_coag-frag}-\eqref{hyp:modele} such that $\rho_k$ lies in $L^p(\Omega_T)$, for all $k\geq 0$ and all $p<\infty$. Then $\rho_k$ lies in the Sobolev space $W^{s,p}(\Omega_T)$, for all $k\geq 0$, all $s\in\N$ and all $p<\infty$.
\end{proposition}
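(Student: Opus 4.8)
The plan is to prove the stronger family of \emph{weighted} estimates
\begin{equation}
\label{eq:plan_weighted}
\sup_{i\in\N^*}\left\Vert i^k c_i\right\Vert_{W^{s,p}(\Omega_T)}<+\infty,\qquad \forall~k\geq 0,\ \forall~p<\infty,\ \forall~s\in\N,
\end{equation}
from which the conclusion follows immediately: for any $k\geq 0$, $\left\Vert\rho_k\right\Vert_{W^{s,p}(\Omega_T)}\leq\sum_{i\in\N^*}\left\Vert i^k c_i\right\Vert_{W^{s,p}(\Omega_T)}\leq(\sum_{i\in\N^*}i^{-2})\sup_{i\in\N^*}\left\Vert i^{k+2}c_i\right\Vert_{W^{s,p}(\Omega_T)}<\infty$. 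To establish~\eqref{eq:plan_weighted} I would induct on $s$, the induction being driven by Lemma~\ref{lem:ci_to_Fi} (which turns a $W^{s,p}$ control on the $i^k c_i$ into a $W^{s,p}$ control on the $i^k Q_i(c)$ and $i^k F_i(c)$) combined with parabolic $L^p$ maximal regularity applied to the individual heat equations satisfied by the $c_i$.

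The base case $s=0$ is immediate: since the $c_i$ are nonnegative, $i^k c_i(t,x)\leq\rho_k(t,x)$ pointwise on $\Omega_T$, so $\left\Vert i^k c_i\right\Vert_{L^p(\Omega_T)}\leq\left\Vert\rho_k\right\Vert_{L^p(\Omega_T)}$ uniformly in $i$, which is~\eqref{eq:plan_weighted} for $s=0$ (and in particular verifies hypothesis~\eqref{hyp:ikc_i} at that level).

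For the inductive step, suppose~\eqref{eq:plan_weighted} holds for some $s$, for all $k$ and all $p$. Then the hypotheses of Lemma~\ref{lem:ci_to_Fi} are satisfied — note that~\eqref{hyp:lem_QF} is assumed here — so $\sup_i\left\Vert i^k Q_i(c)\right\Vert_{W^{s,p}(\Omega_T)}<\infty$ and $\sup_i\left\Vert i^k F_i(c)\right\Vert_{W^{s,p}(\Omega_T)}<\infty$ for all $k$ and $p$. Now fix $i$: the function $c_i$ solves the linear heat equation $\partial_t c_i-d_i\Delta_x c_i=Q_i(c)+F_i(c)$ on $\Omega_T$ with homogeneous Neumann boundary conditions and initial datum $c_i^{in}$, and $a\leq d_i\leq b$ by~\eqref{hyp:di}. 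By the $L^p$ parabolic maximal regularity estimate of Definition~\ref{def:Cmp} and its higher-order counterpart (obtained by differentiating the equation, the lower-order commutator terms produced by the curvature of $\partial\Omega$ being absorbed using the inductive hypothesis), the $W^{s,p}$ bound on the right-hand side, together with enough regularity of $c_i^{in}$, yields a bound for $c_i$ in the parabolic Sobolev space of order $s+2$ in space, hence in particular in $W^{s+1,p}(\Omega_T)$, with a constant depending only on $a,b,s,p,T,\Omega$ and therefore \emph{uniform in $i$}. Tracking the weights $i^k$ is harmless, since Lemma~\ref{lem:ci_to_Fi} delivers the bound on $i^k(Q_i(c)+F_i(c))$ for \emph{every} $k$, so any fixed loss of powers of $i$ is recovered by enlarging $k$. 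We thus obtain~\eqref{eq:plan_weighted} with $s$ replaced by $s+1$, which closes the induction.

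The hard part is not conceptual but organizational, and lives entirely in the inductive step: one must (i) keep all parabolic regularity constants independent of $i$, which is exactly what the two-sided bound $a\leq d_i\leq b$ from~\eqref{hyp:di} provides; (ii) run the regularity bootstrap up to the boundary $\partial\Omega$, using the smoothness of $\Omega$ and the compatibility of the homogeneous Neumann condition with the successively differentiated equations; and (iii) run it up to the initial time $t=0$, which is where one needs the smoothness and boundary-compatibility of the initial data $c_i^{in}$ together with a uniform-in-$i$ control of their weighted Sobolev norms (inherited from the regularity of the $\rho_k^{in}$), as assumed in Theorem~\ref{th:all_moments_w}. The one genuinely new input, namely the estimate on the nonlinearities $Q_i$ and $F_i$, has already been isolated in Lemma~\ref{lem:ci_to_Fi}, so the remainder is a fairly standard — if somewhat lengthy — parabolic bootstrap.
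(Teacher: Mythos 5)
Your proposal is correct and follows essentially the same route as the paper: an induction on $s$ for the weighted bounds $\sup_i\Vert i^k c_i\Vert_{W^{s,p}(\Omega_T)}$, with the base case given by $i^k c_i\leq\rho_k$, the inductive step driven by Lemma~\ref{lem:ci_to_Fi} plus uniform-in-$i$ parabolic regularity (via~\eqref{hyp:di}), and the final summation $\Vert\rho_k\Vert_{W^{s,p}}\leq\sum_i i^{-2}\Vert i^{k+2}c_i\Vert_{W^{s,p}}$. Your additional remarks on boundary and initial-time regularity only flesh out details the paper leaves implicit.
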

\begin{proof}
We are going to show, by induction on $s$, that for all $s\in\N$,
\begin{equation}
\label{eq:induc_s}
\sup\limits_{i\geq 1} \left\Vert i^k c_i \right\Vert_{W^{s,p}(\Omega_T)} < +\infty,\quad \forall k\in\N,\ \forall p<+\infty.
\end{equation}
For all $i\in\N^*$, we have $\left\Vert i^k c_i \right\Vert_{L^{p}(\Omega_T)}\leq \left\Vert \rho_k \right\Vert_{L^{p}(\Omega_T)}$, therefore~\eqref{eq:induc_s} holds for $s=0$. Assuming~\eqref{eq:induc_s} for a given $s\in\N$, Lemma~\ref{lem:ci_to_Fi} shows that
\begin{equation*}
\sup\limits_{i\geq 1} \left\Vert i^k \left(Q_i(c)+F_i(c)\right) \right\Vert_{W^{s,p}(\Omega_T)} < +\infty,\quad \forall k\in\N,\ \forall p<+\infty.
\end{equation*}
Since 
\begin{equation*}
\left(\partial_t - d_i \Delta_x\right) i^kc_i = i^k\left(Q_i(c)+F_i(c)\right),
\end{equation*}
the regularizing properties of the heat equation yield that
\begin{equation*}
\sup\limits_{i\geq 1} \left\Vert i^k c_i \right\Vert_{W^{s+1,p}(\Omega_T)} < +\infty,\quad \forall k\in\N,\ \forall p<+\infty,
\end{equation*}
where we also used~\eqref{hyp:di}, i.e. that the $d_i$ are bounded above and below, which ensure that the regularity estimates are uniform w.r.t. $i$. Therefore~\eqref{eq:induc_s} holds for all $s\in\N$. Notice that we also get $W^{s,p}$ estimates for moments $\rho_k$ of any order, because
\begin{equation*}
\left\Vert \rho_k \right\Vert_{W^{s,p}(\Omega_T)} \leq \sum_{i=1}^{\infty} \frac{1}{i^2} \left\Vert i^{k+2}c_i\right\Vert_{W^{s,p}(\Omega_T)} \leq \sup\limits_{i\geq 1} \left\Vert i^{k+2}c_i\right\Vert_{W^{s,p}(\Omega_T)}\sum_{i=1}^{\infty} \frac{1}{i^2}.
\end{equation*} 
\end{proof}

Finally, we can give the
\newline\noindent\textit{Proof of Theorem~\ref{th:all_moments_w}.}
We want to apply Proposition~\ref{prop:smooth}. Notice that~\eqref{hyp:lem_QF} is implied by the assumptions of Theorem~\ref{th:all_moments_w}. 

In the sublinear coagulation case, the fact that $\rho_k$ lies in $L^p(\Omega_T)$ for all $k\geq 0$ and all $p<\infty$ was already proven in \cite[Theorem 1.9]{BreDevFel16}, in the case of pure coagulation ($F_i=0$ for all $i\in\N^*$). The proof is a more technical version of the one of Proposition~\ref{th:first_moment_convergence} that can be readily extended to cases including fragmentation. One only needs to assume $N\leq 2$ (to get Lemma~\ref{lem:c_i}, which is valid in any dimension only when there is no fragmentation). Then, since the contribution of the fragmentation to the evolution of truncated moment $\rho_k^{I,n}$ of any order is non positive (thanks to~\eqref{hyp:modele}):
\begin{equation*}
\sum_{i=I}^{\infty}i^kF_i = -\sum_{i=I}^{\infty}B_ic_i\left(i^k-\sum_{j=I}^{i-1}j^k\beta_{i,j}\right) \leq 0,
\end{equation*}
the proof from~\cite{BreDevFel16} still holds without further modifications, even when the fragmentation is nonzero, and we get that $\rho_k$ lies in $L^p(\Omega_T)$ for all $k\geq 0$ and all $p<\infty$, in the sublinear coagulation case.

In the strong fragmentation case, since the assumptions implies that $\rho_k^{in}$ lies in $L^1(\Omega)$ for all $k\geq 0$, Theorem~\ref{th:strong_frag} yields that $\rho_k$ lies in $L^1(\Omega_T)$ for all $k\geq 0$. But by Proposition~\ref{th:first_moment_convergence}, we also have that $\rho_1$ lies in $L^p(\Omega_T)$ for all $p<\infty$. By interpolation we then get that $\rho_k$ lies in $L^p(\Omega_T)$ for all $k\geq 0$ and all $p<\infty$.

Therefore, we can use Proposition~\ref{prop:smooth} in both the sublinear coagulation and the strong fragmentation cases. The $\C^{\infty}$ regularity announced in Theorem~\ref{th:all_moments_w} is then a straightforward consequence of Sobolev embeddings.  

We finish with the proof of the uniqueness statement, which is an extension from a result in~\cite{HamRez07} where only the case without fragmentation is treated.
We consider $c=\left(c_i\right)_{i\in\N^*}$ and $\tilde c=\left(\tilde c_i\right)_{i\in\N^*}$ two smooth solutions to the coagulation-fragmentation system \eqref{eq:syst_coag-frag}-\eqref{hyp:modele} and compute
\begin{align*}
\frac{d}{dt}\int\limits_{\Omega} \sum_{i=1}^{\infty}i\vert c_i-\tilde c_i\vert &= \int\limits_{\Omega} \sum_{i=1}^{\infty}\varphi_i \left(Q_i(c)-Q_i(\tilde c)+F_i(c)-F_i(\tilde c)\right) \\
&= \frac{1}{2}\int\limits_{\Omega} \sum_{i=1}^{\infty}\sum_{j=1}^{\infty}a_{i,j}(c_ic_j-\tilde c_i \tilde c_j)(\varphi_{i+j}-\varphi_i-\varphi_j) \\
&\quad - \int\limits_{\Omega} \sum_{i=2}^{\infty}B_i(c_i-\tilde c_i)\left(\varphi_i-\sum_{j=1}^{i-1}\beta_{i,j}\varphi_j\right),
\end{align*}
where
\begin{equation*}
\varphi_i=i \sgn (c_i-\tilde c_i).
\end{equation*}
We first bound the first term. Rewriting $(c_ic_j-\tilde c_i \tilde c_j)$ as $(c_i-\tilde c_i)c_j + (c_j-\tilde c_j)\tilde c_i$, estimating
\begin{align*}
(c_i-\tilde c_i)c_j(\varphi_{i+j}-\varphi_i-\varphi_j) &\leq (i+j)\vert c_i-\tilde c_i\vert c_j - i\vert c_i-\tilde c_i\vert c_j + j\vert c_i-\tilde c_i\vert c_j \\
& = 2j\vert c_i-\tilde c_i\vert c_j,
\end{align*}
and using the symmetry of the coagulation coefficients, we end up with
\begin{equation*}
\frac{1}{2}\int\limits_{\Omega} \sum_{i=1}^{\infty}\sum_{j=1}^{\infty}a_{i,j}(c_ic_j-\tilde c_i \tilde c_j)(\varphi_{i+j}-\varphi_i-\varphi_j) \leq \int\limits_{\Omega} \sum_{i=1}^{\infty}\sum_{j=1}^{\infty}a_{i,j}\vert c_i-\tilde c_i\vert j(c_j+\tilde c_j).
\end{equation*}
Since $a_{i,j}\leq Cij$, we obtain
\begin{align*}
& \frac{1}{2}\int\limits_{\Omega} \sum_{i=1}^{\infty}\sum_{j=1}^{\infty}a_{i,j}(c_ic_j-\tilde c_i \tilde c_j)(\varphi_{i+j}-\varphi_i-\varphi_j)\\
&\leq C\int\limits_{\Omega} \sum_{i=1}^{\infty}i\vert c_i-\tilde c_i\vert\sum_{j=1}^{\infty}j^2(c_j+\tilde c_j) \\
&\leq C\left(\left\Vert \rho_2(c)\right\Vert_{L^{\infty}(\Omega_T)} + \left\Vert \rho_2(\tilde c)\right\Vert_{L^{\infty}(\Omega_T)}\right)\int\limits_{\Omega} \sum_{i=1}^{\infty}i\vert c_i-\tilde c_i\vert.
\end{align*}
For the second term, we have (using~\eqref{hyp:modele})
\begin{align*}
- \int\limits_{\Omega} \sum_{i=2}^{\infty}B_i(c_i-\tilde c_i)\left(\varphi_i-\sum_{j=1}^{i-1}\beta_{i,j}\varphi_j\right) &\leq \int\limits_{\Omega} \sum_{i=2}^{\infty}B_i\vert c_i-\tilde c_i\vert \left(i+\sum_{j=1}^{i-1}\beta_{i,j}j\right) \\
&= 2\int\limits_{\Omega} \sum_{i=2}^{\infty}B_i i\vert c_i-\tilde c_i\vert \\
&\leq 2\sup\limits_{i\in\N^*} B_i \int\limits_{\Omega} \sum_{i=2}^{\infty}i\vert c_i-\tilde c_i\vert.
\end{align*}
Putting everything back together, we get
\begin{equation*}
\frac{d}{dt}\int\limits_{\Omega} \sum_{i=1}^{\infty}i\vert c_i-\tilde c_i\vert \leq \left( C\left(\left\Vert \rho_2(c)\right\Vert_{L^{\infty}(\Omega_T)} + \left\Vert \rho_2(\tilde c)\right\Vert_{L^{\infty}(\Omega_T)}\right) + 2\sup\limits_{i\in\N^*} B_i \right) \int\limits_{\Omega} \sum_{i=1}^{\infty}i\vert c_i-\tilde c_i\vert,
\end{equation*}
and by Gronwall's Lemma we conclude that, if the solutions $c$ and $\tilde c$ have the same initial data, then they remain equal for all positive time. \hfill $\qed$

\section*{Acknowledgement}

It is a pleasure to thank Laurent Desvillettes for helpful discussions during the preparation of this work. The research leading to this paper was partially funded by the french ``ANR blanche'' project Kibord: ANR-13-BS01-0004.

\end{document}